\documentclass[twoside,10pt]{article}
\usepackage [top=25truemm,bottom=25truemm,left=25truemm,right=25truemm]{geometry}
\usepackage{fancybox}
\usepackage{latexsym,amssymb,amsfonts}
\usepackage[dvips]{graphicx}
\usepackage{cite}
\usepackage{mysty}
\usepackage{yhmath}
\graphicspath{{figures/}}

\DeclareMathOperator{\Ex}{\mathbb{E}}
\DeclareMathOperator{\Var}{\mathbb{V}}

\usepackage[bf,hang]{caption}
\usepackage{subcaption}
\newcommand{\refsubref}[2]{\ref{#1}(\subref{#1_#2})}

\usepackage[colorlinks=true,linkcolor=.,citecolor=.]{hyperref}

\newtheorem{cor}[thm]{Corollary}


\begin{document}
\title{A study of the Antlion Random Walk}
\author{
Akihiro Narimatsu\thanks{Department of Applied Systems and Mathematics, Kanagawa University, 3-27-1 Rokkakubashi, Kanagawa-ku, Yokohama City, Kanagawa, 221-0802, Japan, E-mail: ft102254ln@kanagawa-u.ac.jp}, 
\quad{}Tomoki Yamagami\thanks{Department of Information and Computer Sciences,
  Saitama University, 255 Shimo-Okubo, Sakura-ku, Saitama City, Saitama,
  338-8570 Japan, E-mail: tyamagami@mail.saitama-u.ac.jp}
}
\date{}
\maketitle
\begin{abstract}
Random walks (RWs) are fundamental stochastic processes with applications across physics, computer science, and information processing. A recent extension, the laser chaos decision-maker, employs chaotic time series from semiconductor lasers to solve multi-armed bandit (MAB) problems at ultrafast speeds, and its threshold adjustment mechanism has been modeled as an RW. However, previous analyses assumed complete memory preservation ($\alpha = 1$), overlooking the role of partial memory in balancing exploration and exploitation. In this paper, we introduce the \textit{Antlion Random Walk} (ARW), defined by $X_t = \alpha X_{t-1} + \xi_t$ with $\alpha \in [0,1]$ and Rademacher-distributed increments $(\xi_t)$, which describes a walker pulled back toward the origin before each step. We show that varying $\alpha$ significantly alters ARW dynamics, yielding distributions that range from uniform-like to normal-like. Through mathematical and numerical analyses, we investigate expectation, variance, reachability, positive-side residence time, and distributional similarity. Our results place ARWs within the framework of autoregressive (AR(1)) processes while highlighting distinct non-Gaussian features, thereby offering new theoretical insights into memory-aware stochastic modeling of decision-making systems.
\end{abstract}

\section{Introduction}\label{intro}
A \textit{Random Walk} (RW)\cite{lawler2010random} is one of the most fundamental models in stochastic processes, describing a path formed by successive random steps.
It plays a central role in probability theory and has found applications across diverse fields, including physics\cite{chandler1987introduction}, finance~\cite{malkiel2011random}, and economics~\cite{viswanathan2008levy,abbott2015random}.
In computer science, RWs also provide the basis for algorithms handling stochastic state transitions~\cite{xia2019random}.
The most fundamental case, the \textit{simple RW}, considers a walker on a one-dimensional lattice taking unit steps.
Numerous extensions exist, such as higher-dimensional formulations, variable step lengths, and continuous-time versions.
This paper introduces a new extension of RWs, motivated by decision-making systems driven by time series.

{A time-series decision-maker has been proposed as an application of chaotic time series generated by semiconductor lasers~\cite{naruse2017ultrafast,mihana2018memory,kitayama2019novel}.
Chaotic time series are sequences produced by deterministic dynamical systems that appear random and unpredictable over long times.
In lasers, instabilities in oscillations governed by nonlinear differential equations generate such chaos~\cite{uchida2012optical,ohtsubo2017semiconductor}.
These chaotic dynamics have been explored as computational resources, particularly because semiconductor lasers can produce ultrafast chaotic signals.
This property enables engineering applications such as high-speed random number generation~\cite{uchida2008fast,argyris2010implementation} and, more recently, the development of photonic accelerators~\cite{kitayama2019novel}.
Unlike conventional digital computing, which faces limits due to growing demands from artificial intelligence and big data, photonic accelerators exploit light's properties for fast and energy-efficient information processing.
Chaotic laser signals, with their rapid and unpredictable variations, are especially promising.
The so-called \textit{laser chaos decision-maker} applies such signals to sequential decision-making tasks~\cite{naruse2017ultrafast,mihana2018memory}.
}

{The time-series decision-maker provides a framework for solving \textit{Multi-Armed Bandit} (MAB) problems~\cite{robbins1952some}, a basic setting in reinforcement learning~\cite{sutton2018reinforcement}.
An MAB problem models repeated selections among multiple slot machines (arms), each of which probabilistically yields a reward.
The decision-maker seeks to maximize cumulative rewards without prior knowledge of winning probabilities, balancing exploration of arms with exploitation of seemingly profitable ones.
The time series decision-maker is an approach of implementing the tug-of-war method~\cite{kim2010tug},
which is a policy for the MAB problem inspired by the behavior of slime molds~\cite{kim2015efficient}.
It has been reported that the fluctuation of slime molds plays a crucial role for exploration,
and the time series decision-maker is influenced by such behavior. 
The simplest form, illustrated in Fig.~\ref{intro:fig:mab}, considers two arms~\cite{naruse2017ultrafast,mihana2018memory}. At each time $t$, a temporal signal $s_t$ from the chaotic time series is compared with a threshold $\theta_t$: if $s_t \geq \theta_t$ (resp. $s_t < \theta_t$), arm A (resp. B) is chosen.
The threshold evolves according to
\begin{align}
\theta_t &= k [X_t], \\
X_t &= \alpha X_{t-1} + \xi_t, \label{intro:eq:threshold}
\end{align}
where $[X]$ denotes the closest integer to $X$, $k > 0$ is a constant, $\alpha \in [0,1]$, and $\xi_t$ is defined by
\begin{equation}
\xi_t = \left\{\begin{array}{ll}
-\Delta, & \text{if arm A is selected and yields a reward},\\
+\Delta, & \text{if arm B is selected and yields a reward},\\
+\Omega, & \text{if arm A is selected but fails to yield a reward},\\
-\Omega, & \text{if arm B is selected but fails to yield a reward},
\end{array}\right.
\end{equation}
with $\Delta,\ \Omega >0$.
Here, $X_t$ is the threshold adjuster: it shifts the threshold depending on the outcome of the latest decision. If the chosen arm yields a reward, the adjuster favors the same arm; otherwise, it shifts to favor the opposite arm.
Experiments have shown that a photonic implementation using chaotic laser signals can solve two-armed bandit problems at GHz speeds.
}
\begin{figure}[t]
    \centering
    \includegraphics[width=165mm]{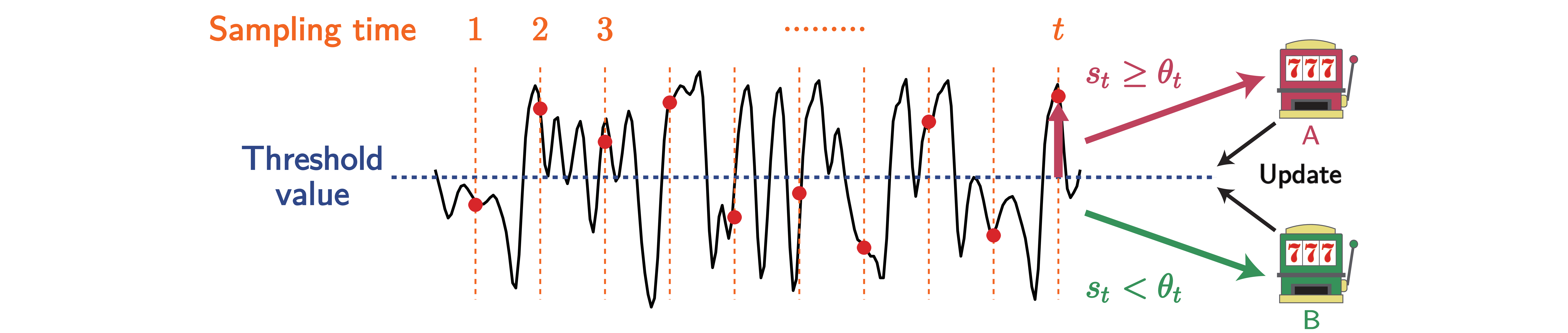}
    \caption{
        {Schematic of the time-series decision-maker for the two-armed bandit problem. 
        For each sampling time $t$, arm either A or B is selected according to comparison between the signal value $s_t$ and the threshold value $\theta_t$. 
        If $s_t \geq \theta_t$, arm A is selected; otherwise, arm B is selected.
        In this figure, arm A is selected at $t$-th decision.
        Based on the result of arm play, the threshold value is updated for the next decision.}
        }\label{intro:fig:mab}
\end{figure}

{Analyzing the laser chaos decision-maker is essential for improving performance. 
One key factor is the autocorrelation of the chaotic time series.
Experiments~\cite{naruse2017ultrafast} and surrogate data analyses~\cite{okada2021analysis} revealed that negative autocorrelation improves performance.
A stochastic process model describing the threshold adjustment mechanism was proposed in Ref.~\cite{okada2022theory}. That study simplified parameters to $k = \alpha = \Delta = \Omega = 1$ with $X_0=0$, yielding $\theta_t = X_t$.
Under these assumptions, the threshold adjustment reduces to a simple RW whose transition probabilities depend on the selected arm.
Specifically, $\xi_t$ is treated as a random variable determined as follows: if arm A (resp. B) is chosen, then $\xi_t = 1$ with probability $1-p_\rA$ ($p_\rB$) and $\xi_t = -1$ with probability $p_\rA$ ($1-p_\rB$), where $p_\rA$ and $p_\rB$ are the winning probabilities of arms A and B.
The model incorporated switching transition probabilities, corresponding to variations of the chaotic time series, and demonstrated theoretically that negative autocorrelation accelerates decision-making as shown in Fig.~\ref{intro:fig:stochastic}.}

\begin{figure}
    \centering
    \includegraphics[width=165mm]{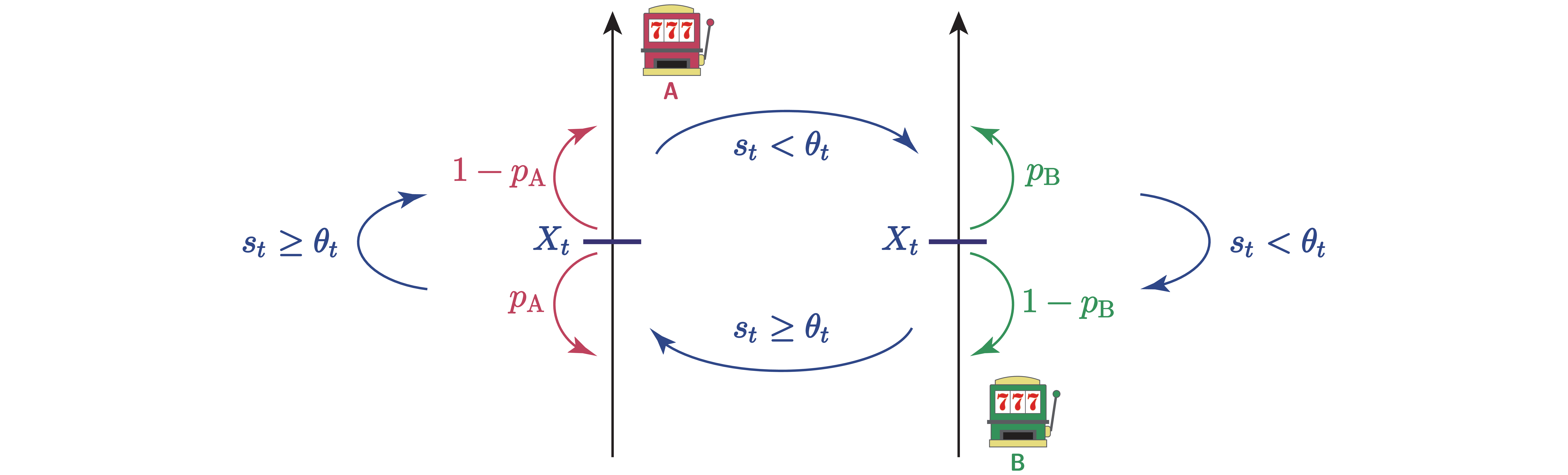}
    \caption{
        {Schematic of the stochastic process model proposed in Ref.~\cite{okada2022theory}, describing the time-series decision-making process.
        Herein, the threshold adjuster $X_t$ varies following Eq.~\eqref{intro:eq:threshold}.
        Since $\xi_t$ is determined according to the result of arm play, the process can be described by two one-dimensional random walks, 
        in each of which the transition probability depends on the winning probability of arm A or B.
        Which random walk is employed depends on the comparison between the signal value $s_t$ and the threshold value $\theta_t$, as illustrated in Fig.~\ref{intro:fig:mab}.}
        }\label{intro:fig:stochastic}
\end{figure}

{However, the model in Ref.~\cite{okada2022theory} did not include the effect of $\alpha$.
The parameter $\alpha$, often called the memory parameter, represents the degree to which past preferences persist.
When $\alpha = 1$, memory is fully preserved, so $X_t$ can grow without bound depending on outcomes, potentially creating a lasting bias toward one arm.
Introducing $\alpha < 1$ reduces this bias, allowing for reflection and rebalancing between exploration and exploitation~\cite{march1991exploration}.
Although the memory effect was experimentally examined in Ref.~\cite{mihana2018memory}, theoretical analyses remain scarce.
In particular, few studies have investigated the RW defined by Eq.~\eqref{intro:eq:threshold} with $\alpha < 1$. 
This motivates a fundamental study of such models.}

{This paper introduces a RW model that incorporates a parameter representing memory in decision-making.
We regard $X_t$ in Eq.~\eqref{intro:eq:threshold} as the walker's position at discrete time $t \in \mathbb{N}$, where $(\xi_t)_{t \in \mathbb{N}}$ are i.i.d. random variables following a (generalized) Rademacher distribution, where $\mathbb{N}$ is the set of positive integers.
In a simple RW, the position is updated by adding $\pm 1$ to the previous position. 
In our model, on the other hand, the update includes a factor $\alpha \in [0,\ 1]$ applied to $X_{t-1}$, reducing the influence of past positions when $\alpha < 1$.
Equivalently, the walker is pulled back toward the origin before taking each step.
This behavior resembles an ant sliding back in an antlion's pit, so we call the model the \textit{Antlion Random Walk} (ARW).
We demonstrate that varying $\alpha$ significantly alters ARW dynamics, 
which is described by density of the probability and similarity to the uniform or normal distribution.
Based on these observations, we analyze ARWs mathematically and numerically, 
focusing on probability distributions, reachability, positive-side residence time, and distributional similarity.}

{It should be emphasized that our model belongs to a special class of first-order autoregressive (AR(1)) processes.
From an engineering perspective, a sequence of i.i.d. random variables $(\xi_t)$ is typically modeled as white Gaussian noise,
which leads to a discrete version of Ornstein--Uhlenbeck (OU) process~\cite{uhlenbeck1930theory}.
The key difference between discrete OU processes and ARWs lies in the definition of $\xi_t$: ARWs assume $\xi_t$ follows the (generalized) Rademacher distribution.
Moreover, to the best of our knowledge, few studies have investigated such models from a RW perspective.
This viewpoint enables novel considerations of the model, such as positive-side residence time.}

The remainder of this paper is organized as follows.
Section~\ref{model} introduces the definition of ARWs and presents preliminary observations from trajectories and distributions.
Section~\ref{analyses} develops theoretical analyses, including expectation, variance, path uniqueness, reachability, positive-side residence time, and similarity to the normal distribution.
Section~\ref{discussions} discusses further properties and open questions, and Section~\ref{conclusions} summarizes the paper.

\section{Model}\setcounter{equation}{0}\label{model}
This section presents the definition of ARWs and observes the behavior of walkers which travel following ARWs, referred to as \textit{Antlion Random Walkers} (ARWers).

\subsection{Definition}\label{model:sub:model}
We consider a walker on a one-dimensional line $\mathbb{R}$. Let $(X_t)_{t\in\mathbb{N}_0}$ be a sequence of random variables that represent the position of the walker after $t$ steps. {It should be noted that the set $\mathbb{N}_0$ is the union of $\mathbb{N}$ and $\{0\}$,} where $\mathbb{N}$ comprises positive integers. {The position $X_t$ depends on the selection until the $t$-th step, each of which is denoted by a sequence of random variables $(\xi_t)_{t\in\mathbb{N}}$ independent and identically distributed by a (generalized) Rademacher distribution:}
\begin{align}\label{model:eq:xi}
    \mathbb{P}(\xi_t=-1) =p,\quad \mathbb{P}(\xi_t=1) =1-p
\end{align}
for any $t\in\mathbb{N}$ with $p\in [0,\,1]$. By using $\xi_t$, we define time evolution of $X_t$ as follows:
\begin{align}
    X_t = \alpha X_{t-1} + \xi_t, \label{model:eq:arw}
\end{align}
appearing in Eq.~\eqref{intro:eq:threshold} as the updating rule of the threshold adjuster. 
Here, $\alpha$ is a real number in the range $[0,\,1]$. 
We refer to the sequence $(X_t)_{t\in\mathbb{N}_0}$ following Eq.~\eqref{model:eq:arw} as an \textit{Antlion Random Walk} (ARW). 
{As mentioned in Sec.~1, this equation corresponds to a discrete version of OU process, but it exhibits different behavior due to the fact that the noise term is restricted to the values \(\pm 1\). For example, as will be discussed later, its distribution does not converge to the normal distribution if $\alpha < 1$.}
In the case of $\alpha = 1$, ARW is equivalent to simple RW. 
When $\alpha=0$, $X_t$ is identical to $\xi_t$; that is, $X_t$-s are independent of each other.

In the following, we assume that $\alpha$ is in the range $(0,\,1)$ and $X_0 = 0$. Then we have
\begin{align}
    \mathbb{P}(X_0=x) = \begin{cases} 1, & {x=0},\\ 0, & {x\neq 0}. \end{cases}
\end{align}
At time $t=1$, $X_1 = \xi_1$ holds, and thus $\mathbb{P}(X_1=x)$ is calculated as
\begin{align}
    \mathbb{P}(X_1=x) = \begin{cases} p, & {x= -1},\\ 1-p, & {x=1}, \\ 0, & {\text{otherwise}}.\end{cases}
\end{align}
At time $t=2$, $X_2 = \alpha X_1 + \xi_2$ holds, and thus the support is $\{-\alpha-1,\,-\alpha +1,\,\alpha -1,\,\alpha +1\}$, and the respective probabilities are calculated as follows:
\begin{align}
    \mathbb{P}(X_2=x) = \begin{cases} 
        p^2, & {x = -\alpha -1}, \\ p(1-p), & {x = -\alpha +1},\\
        p(1-p), & {x = \alpha -1}, \\ (1-p)^2, & {x = \alpha +1}.
        \end{cases}
\end{align}
{When considering the limit \(\alpha \to 0\), both \(-\alpha + 1\) and \(\alpha + 1\) converge to 1. In this case, if the displacement at the final time step is the same, the ARWer will be located at the same position, and the probability of being at that position is given by the sum of the probabilities of such paths.}

Now, let us consider general $t\in\mathbb{N}$. Then $X_t$ is represented by $\bfit{\xi}_t = (\xi_s)_{s=1}^{t}$ as
\begin{align}\label{model:eq:explicit}\begin{split}
    X_t &= \alpha X_{t-1} + \xi_t\\
        &= \alpha (\alpha X_{t-2} + \xi_{t-1}) + \xi_t\\
        &= \alpha^2 X_{t-2} + \alpha \xi_{t-1} + \xi_t\\
        &= \cdots = \alpha^{t} X_0 + \alpha^{t-1}\xi_{1} + \cdots + \xi_t\\
        &= \sum_{s=1}^{t} \alpha^{t-s}\xi_{s}.
\end{split}\end{align}
If $\xi_s=1$ holds for all $s=1,\,\cdots,\,t$, this sum is the partial sum of the infinite series $\sum_{s=1}^{\infty}\alpha^{s-1}$ from the beginning to the $t$-th, with the order of addition reversed. 

\subsection{Observations}\label{model:sub:ipr}
In this subsection, we observe various properties of ARWs for $p=1/2$ (symmetric case) through calculations and numerical simulations.

Figures~\refsubref{model:fig:path}{1}--(\subref{model:fig:path_9}) show actual path trajectories of ARWers with $\alpha = 0.1$, $0.5$, and $0.9$, respectively.
You see that the mobile range of ARWers are different among these cases; the larger $\alpha$ is, the wider the range becomes. This fact is mathematically supported by the discussion on the upper and lower bounds, see Proposition~\ref{analyses:prp:supinf}.
If these bounds are close to $0$, a single step strongly affects the position immediately after the step.
This characteristic clearly different from simple RWs can be captured through considering the positive-side residence time, see Proposition~\ref{prp:positive}.

\begin{figure}[t]
\centering
\begin{minipage}[b]{0.3\textwidth}
    \centering\includegraphics[width=\textwidth]{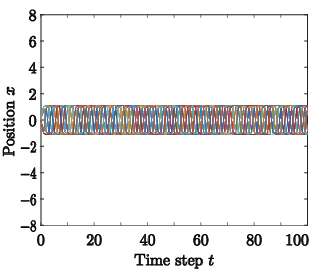}
    \subcaption{$\alpha = 0.1$.}\label{model:fig:path_1}
\end{minipage}\quad
\begin{minipage}[b]{0.3\textwidth}
    \centering\includegraphics[width=\textwidth]{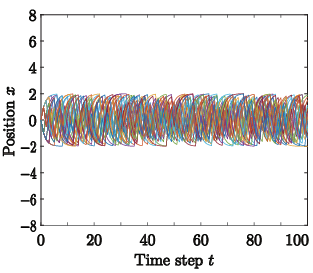}
    \subcaption{$\alpha = 0.5$.}\label{model:fig:path_5}
\end{minipage}\quad
\begin{minipage}[b]{0.3\textwidth}
    \centering\includegraphics[width=\textwidth]{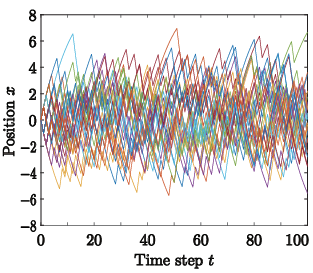}\\
    \subcaption{$\alpha = 0.9$.}\label{model:fig:path_9}
\end{minipage}
    \caption{Path trajectories of ARWers from time step $t=0$ to $t=100$ in the cases that parameter $\alpha$ is (a) $0.1$, (b) $0.5$, and (c) $0.9$, respectively. 
    In each case, probability $p$ (see Eq.~\eqref{model:eq:xi}) and the number of trajectories are set to $1/2$ and $30$, respectively. The random variable $X_t$ on the positions of ARWers is given in Eq.~\eqref{model:eq:arw}.}\label{model:fig:path}
\end{figure}

{Figures~\refsubref{model:fig:prob}{1}--(\subref{model:fig:prob_9}) show the probability distribution of ARWs at time $t=5$ with $\alpha = 0.1$, $0.5$, and $0.9$, respectively.
Based on the observations of path trajectories above, it can be expected that we see high stems around the origin for $\alpha = 0.1$, see Fig.~\refsubref{model:fig:path}{1}.
The actual probability, however, is the same everywhere on possible arrival points, which can also be seen for $\alpha = 0.5$ and $0.9$.
This fact is proven in Corollary~\ref{cor:uniform}.
The number of possible arrival points is 32, which coincides with the number of possible path trajectories until $t=5$.
Thus, we can expect that the possible arrival points and the possible path trajectories have one-by-one correspondence.
This expectation is right, which is shown in Proposition~\ref{prp:pathuniq}.
Moreover, you see that there exist some area that walkers cannot arrive even inside the upper and lower bounds, which is detailed by Theorem~\ref{thm:phase}.}

\begin{figure}[t]
\centering
\begin{minipage}[b]{0.3\textwidth}
    \centering\includegraphics[width=\textwidth]{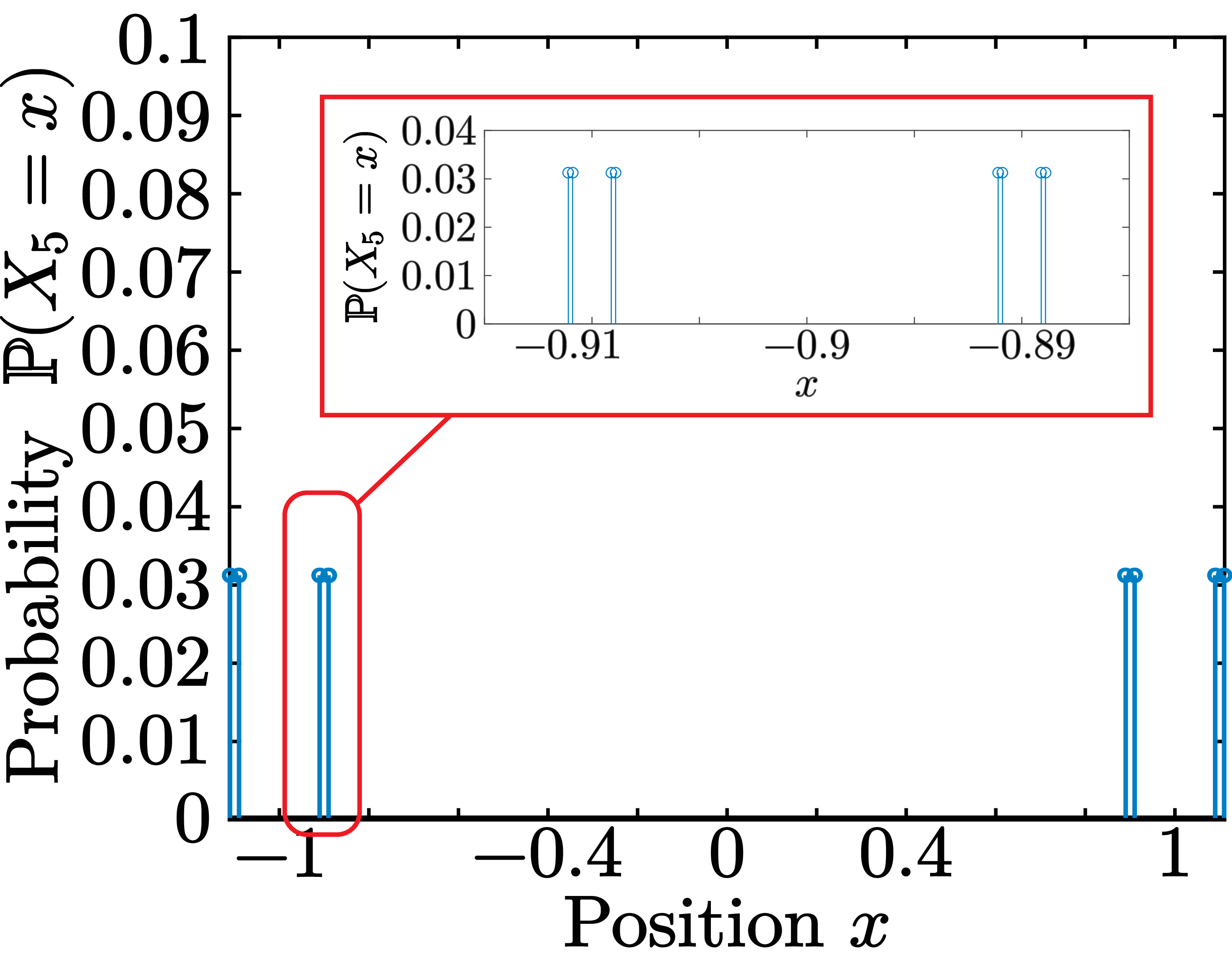}
    \subcaption{$\alpha = 0.1$.}\label{model:fig:prob_1}
\end{minipage}\quad
\begin{minipage}[b]{0.3\textwidth}
    \centering\includegraphics[width=\textwidth]{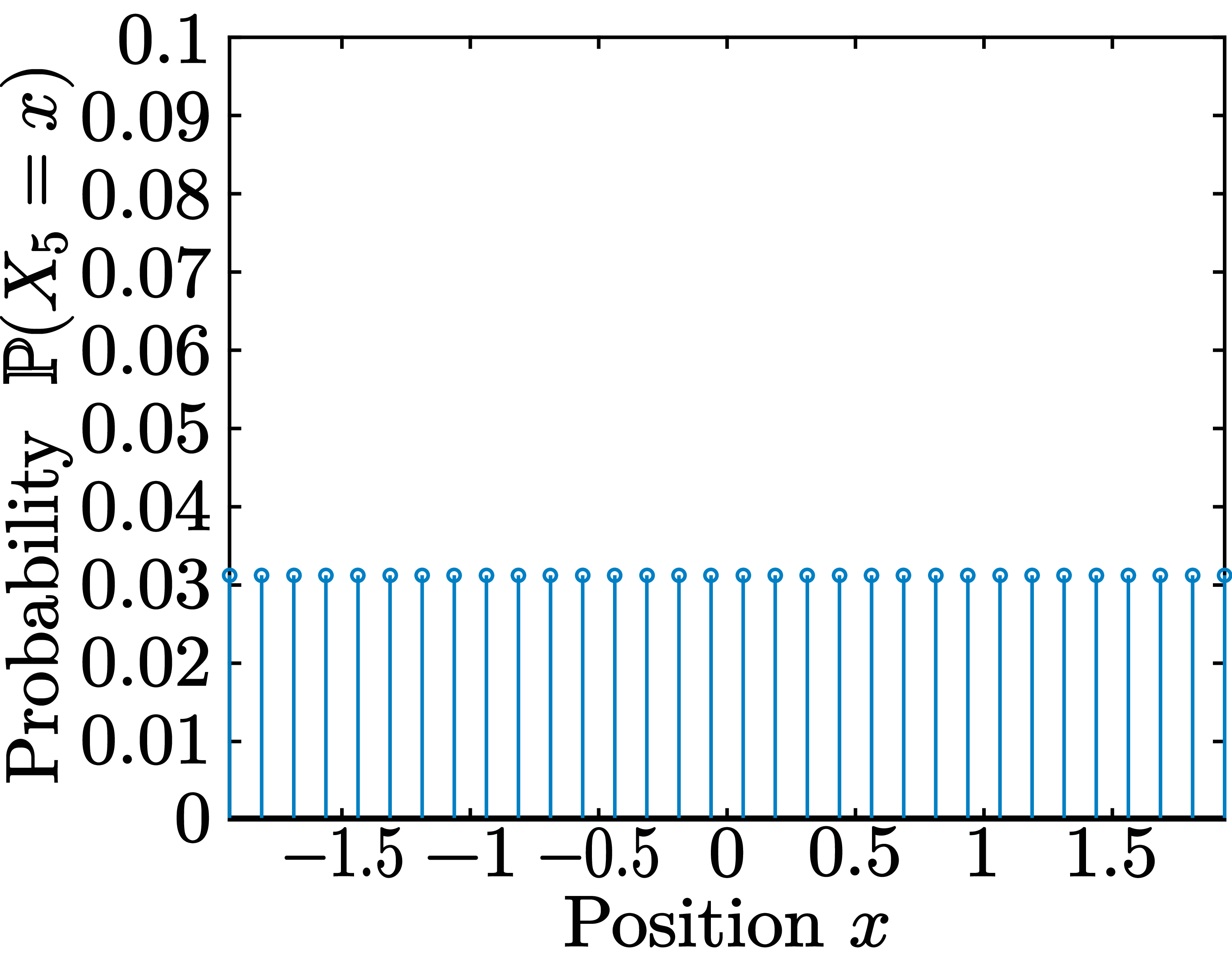}
    \subcaption{$\alpha = 0.5$.}\label{model:fig:prob_5}
\end{minipage}\quad
\begin{minipage}[b]{0.3\textwidth}
    \centering\includegraphics[width=\textwidth]{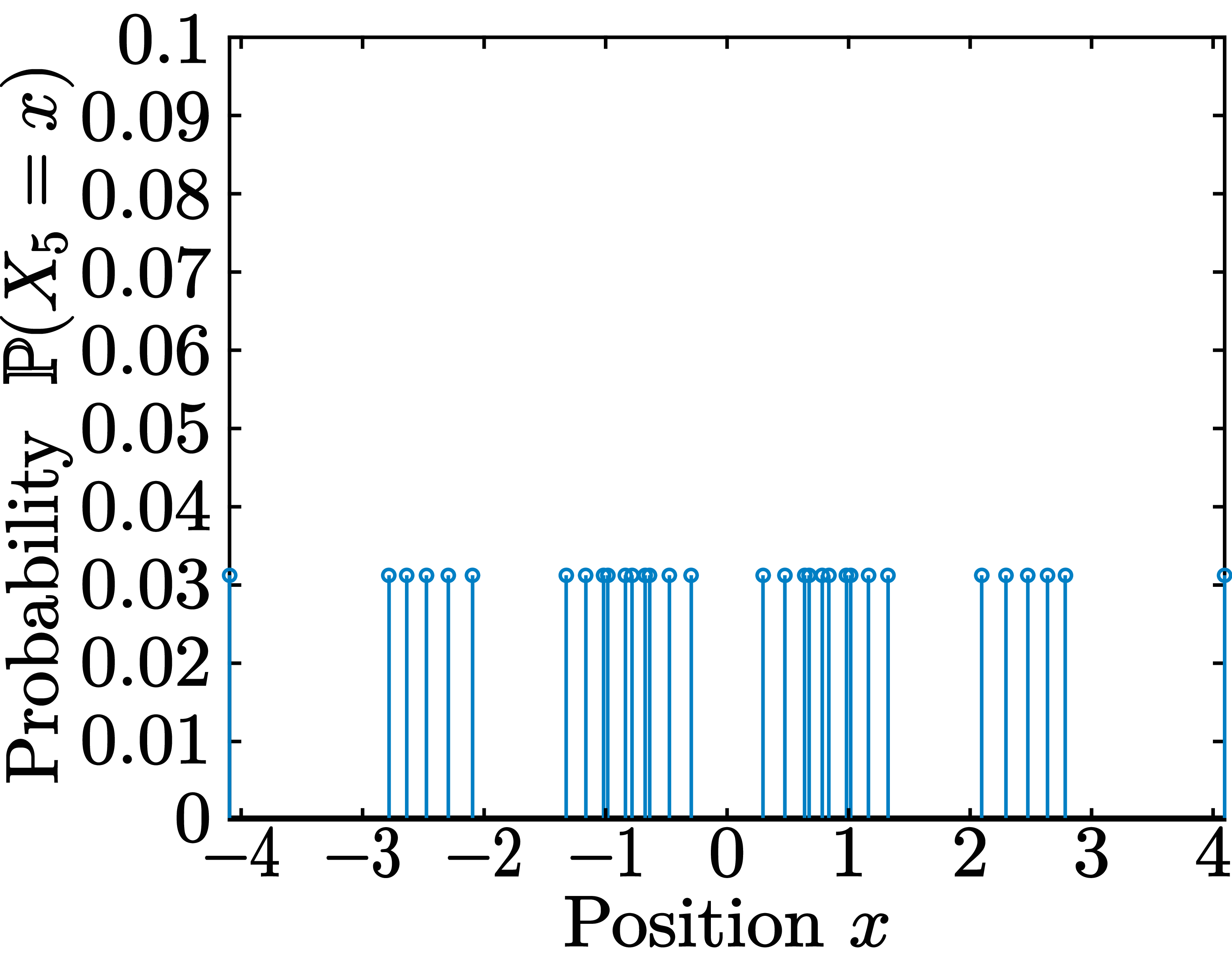}\\
    \subcaption{$\alpha = 0.9$.}\label{model:fig:prob_9}
\end{minipage}
    \caption{
    {Probability distributions of ARWs at time $t=5$ in the cases that parameter $\alpha$ is (a) $0.1$, (b) $0.5$, and (c) $0.9$, respectively. 
    In each case, probability $p$ (see Eq.~\eqref{model:eq:xi}) is set to $1/2$. The random variable $X_t$ on the positions of ARWers is given in Eq.~\eqref{model:eq:arw}.
    Note that the probability on each possible arrival points is $1/2^5 = 0.03125$.}
}\label{model:fig:prob}
\end{figure}

{It is more suitable to introduce the cumulative distribution function for analyzing the characteristic shading of the probability distribution.} Figures~\refsubref{model:fig:dist}{1}--(\subref{model:fig:dist_9}) show the cumulative distribution functions of ARWs, which is defined by $F_t(x)=\mathbb{P}(X_t\leq x)$, at time $t=15$ with $\alpha=0.1$, $0.5$, and $0.9$, respectively. These also support the limitation of the mobile range mentioned above. Besides, in the case of $\alpha=0.1$, there seems to be a significant bias on the probability distribution, in contrast to the cases of $\alpha = 0.5$ and $\alpha = 0.9$. In fact, we can see a clearly difference in dispersion of the probability distribution of ARWs between the cases of less and more than $\alpha = 0.5$, which is discussed with reachability later in Secs.~\ref{model:sub:ipr} and \ref{analyses:sub:phase}. Moreover, the cumulative distribution function in the case of $\alpha=0.9$ seems to be close to that of the normal distribution. Actually, it can be verified that when time step $t$ is small and $\alpha$ is large to some extent, a kind of distances between these two distributions are smaller than that between the simple-RW- and the normal distributions, which is detailed in Sec.~\ref{analyses:sub:distance}.

\begin{figure}[t]
\centering
\begin{minipage}[b]{0.3\textwidth}
    \centering\includegraphics[width=\textwidth]{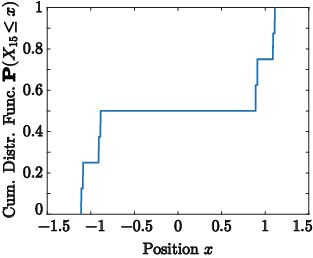}
    \subcaption{$\alpha = 0.1$.}\label{model:fig:dist_1}
\end{minipage}\quad
\begin{minipage}[b]{0.3\textwidth}
    \centering\includegraphics[width=\textwidth]{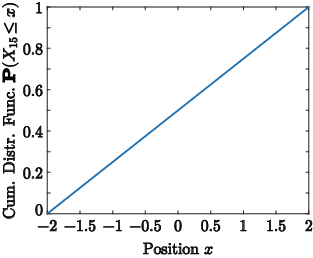}
    \subcaption{$\alpha = 0.5$.}\label{model:fig:dist_5}
\end{minipage}\quad
\begin{minipage}[b]{0.3\textwidth}
    \centering\includegraphics[width=\textwidth]{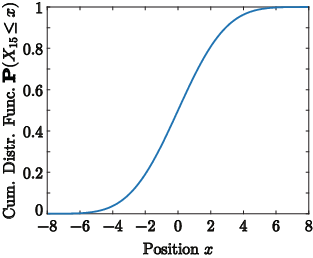}\\
    \subcaption{$\alpha = 0.9$.}\label{model:fig:dist_9}
\end{minipage}
    \caption{Cumulative distribution functions of the ARWs at time step $t=15$ in the cases that parameter $\alpha$ is (a) $0.1$, (b) $0.5$, and (c) $0.9$, respectively. In each case, probability $p$ (see Eq.~\eqref{model:eq:xi}) is set to $1/2$. The random variable $X_t$ on the positions of ARWers is given in Eq.~\eqref{model:eq:arw}.
    }\label{model:fig:dist}
\end{figure}

\section{Analyses}\label{analyses}\setcounter{equation}{0}
This section gives some mathematical statements and numerical calculations, which support the observation results presented above. 
From now on, we assume that $\alpha\in (0,\ 1)$.

\subsection{Expected value and variance}\label{analyses:sub:chara}
By direct calculations, we have the expected value and the variance of $X_t$ as follows:
\begin{prp}\label{analyses:prp:feature}
For $t\geq 1$, the expected value and the variance of $X_t$ are
\begin{align}
    \Ex[X_t] &= \fraction{(1-2p)(1-\alpha^t)}{1-\alpha},\label{eq:ex}\\
    \Var[X_t] &= \fraction{4p(1-p)(1-\alpha^{2t})}{1-\alpha^2}.\label{eq:var}
        \end{align}
\end{prp} 
\begin{proof}[Proof of Proposition~\ref{analyses:prp:feature}]
By Eq.~\eqref{model:eq:explicit}, $X_t$ can be represented by a linear combination of $\xi_s$ for $s=1,\,\cdots,\,t$.
Here, the expected value and the variance of $\xi_s$-s are
\begin{align}
    \Ex[\xi_s] = 1-2p\quad\text{and}\quad \mathbb{V}[\xi_s] = 4p(1-p)
\end{align}
for any $s=1,\,\cdots,\,t$, respectively.
{Since $\xi_s$-s are independent of each other, the expected value and the variance of $X_t$ are calculated as follows:}
\begin{align}
    \Ex[X_t] &= \Ex\left[\sum_{s=1}^{t}\alpha^{t-s}\xi_s\right] = (1-2p)\sum_{s=1}^{t}\alpha^{t-s},\label{analyses:eq:ex2}\\
    \Var[X_t] &= \Var\left[\sum_{s=1}^{t}\alpha^{t-s}\xi_s\right] = 4p(1-p)\sum_{s=1}^{t}\alpha^{2(t-s)}.\label{analyses:eq:var2}
\end{align}
When $0\leq \alpha < 1$, these can be transformed as
\begin{align}
    \Ex[X_t] = \fraction{(1-2p)(1-\alpha^t)}{1-\alpha}\quad\text{and}\quad \Var[X_t] = \fraction{4p(1-p)(1-\alpha^{2t})}{1-\alpha^2}.
\end{align}
\end{proof}
It should be noted that allowing $\alpha = 1$ and assigning it to Eqs.~\eqref{analyses:eq:ex2} and \eqref{analyses:eq:var2}, we obtain the expected value and variance as 
\begin{align}
    \Ex[X_t] = (1-2p)t\quad\text{and}\quad \Var[X_t] = 4p(1-p)t.
\end{align}
These results coincide with well-known ones for simple RWs.

{Furthermore, considering the mean square fluctuation, when \(\alpha\) is close to 1, the variance of the position \(X_t\) at time \(t\) admits the following series expansion:  
\begin{align}
\mathbb{V}[X_t] = 4p(1-p)\left(t - \frac{1}{2}t(t-1)(1 - \alpha^2) + \dots\right).
\end{align}
Thus, the leading term exhibits diffusion similar to Brownian motion, as in the case of simple RWs. However, the subleading term becomes negative for large \(t\) under \(\alpha < 1\), leading to subdiffusion.  
This behavior arises from the fact that an ARW has a confined domain.}

\subsection{Path uniqueness}
The observation of ARWs through probability distribution (see Fig.~\ref{model:fig:prob}) implied that there is one-by-one correspondence between the possible path trajectories and the possible arrival points.
Indeed, we can mathematically state this path uniqueness for $\alpha$ with a specific condition:

\begin{prp}\label{prp:pathuniq}
    Let \(\alpha \in \mathbb{Q}\). For any \(x \in \mathbb{R}\) such that \(\mathbb{P}(X_t = x)>0\) at a given time \(t\), the path of the ARW to \(x\) is unique.
\end{prp}
\begin{proof}
    We demonstrate this proposition using proof by contradiction. Suppose that at a certain time \(t\), there exist two distinct sequences \(\{\xi_s\}_{s=1}^{t}\) and \(\{\xi^{'}_s\}_{s=1}^{t}\) such that 
    \begin{align}
    \sum_{s=1}^{t} \alpha^{t-s} \xi_s = \sum_{s=1}^{t} \alpha^{t-s} \xi^{'}_s.\label{uniqueproof1}
    \end{align}
    Here, each term \(\xi_s\) and \(\xi^{'}_s\) is either \(1\) or \(-1\). Since \(\alpha\) is a rational number, it can be expressed as \(\alpha = m/n\) using two coprime natural numbers \(m\) and \(n\). Thus, Eq.~\eqref{uniqueproof1} can be transformed into the following equation.
    \begin{align}
    &\sum_{s=1}^{t} \left(\fraction{m}{n}\right)^{t-s} \xi_s = \sum_{s=1}^{t} \left(\fraction{m}{n}\right)^{t-s} \xi^{'}_s \notag\\
    &\sum_{s=1}^{t} m^{t-s}n^{s-1} \xi_s=\sum_{s=1}^{t} m^{t-s}n^{s-1} \xi^{'}_s\notag\\
    &\sum_{s=1}^{t} m^{t-s} \left(\fraction{\xi_s-\xi^{'}_s}{2}\right)n^{s-1}=0.\label{uniqueproof2}
    \end{align}
    Note the following:
\begin{equation}
\frac{\xi_s - \xi^{'}_s}{2} =
\begin{cases}
1, & \xi_s = 1 \text{ and } \xi^{'}_s = -1, \\
-1, & \xi_s = -1 \text{ and } \xi^{'}_s = 1, \\
0, & \text{otherwise}.
\end{cases}
\end{equation}
{Since \(m\) and \(n\) are coprime and all terms except for the one with \(s = 1\) are multiples of \(n\), the corresponding coefficient \((\xi_1 - \xi^{'}_1)/2\) must be zero. Next, dividing both sides of Eq.~\eqref{uniqueproof2} by \(n\), all terms except for the one with \(s = 2\) are multiples of \(n\) in the same manner, implying \((\xi_2 - \xi^{'}_2)/2 = 0\). Repeating this argument shows that the coefficient of each term is zero for all \(s\).}
 Then each term in Eq.~\eqref{uniqueproof2} equals 0, which contradicts the assumption that the sequences \(\{\xi_s\}_{s=1}^{t}\) and \(\{\xi^{'}_s\}_{s=1}^{t}\) are distinct. Thus, the proposition is proven.
\end{proof}
The following corollary rephrases the statement of Proposition \ref{prp:pathuniq}.
\begin{cor}\label{cor:uniform}
     Let \(\alpha \in \mathbb{Q}\). For any point {\(x \in [-1/(1-\alpha), 1/(1-\alpha)]\)}, the probability distribution of the symmetric ARW at time \(t\) is given by:
\begin{align*}
\mathbb{P}(X_t = x) =
\begin{cases}
2^{-t}, &  \text{if }x \text{ is a position that ARWers can exist at time }t, \\
0, & \text{otherwise}.
\end{cases}
\end{align*}
\end{cor}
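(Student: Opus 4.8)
The plan is to read the statement off Proposition~\ref{prp:pathuniq} together with the symmetry assumption $p = 1/2$. By Eq.~\eqref{model:eq:explicit}, $X_t = \sum_{s=1}^{t} \alpha^{t-s}\xi_s$ is a deterministic function of the sign vector $(\xi_1,\dots,\xi_t) \in \{-1,1\}^t$; write $\Phi_t$ for this map. The set of positions an ARWer can occupy at time $t$ is then precisely the image $\Phi_t(\{-1,1\}^t)$, and for any $x \in \mathbb{R}$ the quantity $\mathbb{P}(X_t = x)$ equals the total weight $\sum \mathbb{P}(\xi_1 = \varepsilon_1)\cdots\mathbb{P}(\xi_t = \varepsilon_t)$ summed over all sign vectors $(\varepsilon_1,\dots,\varepsilon_t) \in \{-1,1\}^t$ with $\Phi_t(\varepsilon_1,\dots,\varepsilon_t) = x$. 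When $p = 1/2$ each of the $2^t$ sign vectors carries the same weight $2^{-t}$, so $\mathbb{P}(X_t = x) = 2^{-t}\,\#\Phi_t^{-1}(x)$.

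The second step is to invoke path uniqueness. Since $\alpha \in \mathbb{Q}$, Proposition~\ref{prp:pathuniq} says that whenever $\mathbb{P}(X_t = x) > 0$ the preimage $\Phi_t^{-1}(x)$ is a singleton; equivalently, $\#\Phi_t^{-1}(x) \in \{0,1\}$ for every $x$. Substituting into the formula above yields exactly the claimed dichotomy: $\mathbb{P}(X_t = x) = 2^{-t}$ if $x \in \Phi_t(\{-1,1\}^t)$ (that is, $x$ is reachable at time $t$), and $\mathbb{P}(X_t = x) = 0$ otherwise.

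It then remains only to observe that restricting to $x \in [-1/(1-\alpha),\, 1/(1-\alpha)]$ loses nothing. From the explicit representation and the triangle inequality,
\[
|X_t| \;\le\; \sum_{s=1}^{t}\alpha^{t-s} \;=\; \frac{1-\alpha^t}{1-\alpha} \;<\; \frac{1}{1-\alpha},
\]
so every reachable point already lies strictly inside that interval, while any $x$ outside it falls into the ``otherwise'' branch with probability $0$; hence the two-line formula in the statement holds for all $x$ in the interval.

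I do not expect a genuine obstacle: the corollary is essentially a repackaging of Proposition~\ref{prp:pathuniq}. The only points requiring a little care are the bookkeeping that identifies ``positions an ARWer can occupy'' with the image of $\Phi_t$ and the fibre count $\#\Phi_t^{-1}(x)$, and the remark that it is the symmetry $p = 1/2$ --- not merely the rationality of $\alpha$ --- that forces all nonzero atoms to take the common value $2^{-t}$. (For $p \ne 1/2$ the atoms would be supported on the same reachable set but would instead carry the path-dependent weights $p^{k}(1-p)^{t-k}$ with $k = \#\{s : \varepsilon_s = -1\}$, so uniformity would fail.)
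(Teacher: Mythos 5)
Your proposal is correct and follows essentially the same route as the paper's own proof: path uniqueness from Proposition~\ref{prp:pathuniq} forces each nonzero atom to come from exactly one of the $2^t$ equally weighted sign sequences, giving the value $2^{-t}$. Your write-up merely makes explicit the fibre-counting bookkeeping and the containment of the support in $[-1/(1-\alpha),\,1/(1-\alpha)]$, which the paper leaves implicit.
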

\begin{proof}
{By Proposition \ref{prp:pathuniq}, for every position \(x\) that the ARWers can occupy at time \(t\), there exists a unique path leading to it. Therefore, the probability of being at such a position at that time is always \(2^{-t}\). Since the ARWers cannot exist at any other position, the probability is zero elsewhere.}
\end{proof}
Corollary \ref{cor:uniform} indicates that the symmetric ARW is uniformly distributed over the points where it exists at the given time. However, except for the case \(\alpha = 1/2\), these points are not uniformly present within the interval \([-1/(1-\alpha), 1/(1-\alpha)]\).

\subsection{Reachability}\label{analyses:sub:phase}
Where ARWers can arrive is an important factor to understand ARWs.
The observation of path trajectories implied that the upper and lower bounds for $X_t$ exist, being farther from the origin when $\alpha$ is larger, as shown in Fig.~\ref{model:fig:path}.
Moreover, even inside the upper and lower bounds, there seems to be points at which ARWs never arrive.
Such properties can be theoretically explained via the notion called {\it reachability}. The definition of reachability is given as follows. For ARWers, a position $x\in\mathbb{R}$ is {\it reachable} when for all $\epsilon>0$, there exists time $t\in\mathbb{N}$ such that $t$ satisfies 
\begin{align}
\mathbb{P}(|X_t-x|<\epsilon)>0.\label{eq:reachability}
\end{align}

Using this notion, we define the upper bound $\operatorname{sup}X_t = \overline{x}$ of position $X_t$ as the point that satisfies $\mathbb{P}(X_t \leq \overline{x}) = 1$ and is reachable.
Similarly, we consider the lower bound $\operatorname{inf}X_t = \underline{x}$ of the position $X_t$ as the point that satisfies $\mathbb{P}(X_t \geq \underline{x}) = 1$ and is reachable.

The upper and lower bounds are obtained as follows:

\begin{prp}\label{analyses:prp:supinf}
    For $t\geq 0$, there exist the upper and lower bounds of $X_t$ and they are given by
\begin{align}
\operatorname{sup}X_t =\fraction{1}{1-\alpha},\quad \operatorname{inf}X_t =-\fraction{1}{1-\alpha},
\end{align}
respectively.
\end{prp}
\begin{proof}
Since $0<\alpha<1$, we have
\begin{align}\label{eq:upper}
    X_t = \sum_{s=1}^{t}\alpha^{t-s}\xi_s\leq \sum_{s=1}^{t}\alpha^{t-s}= \fraction{1-\alpha^{t}}{1-\alpha} <\lim_{t\to \infty} \fraction{1-\alpha^{t}}{1-\alpha}=\fraction{1}{1-\alpha},
\end{align}
for all time \(t\). On the other hand, for all $\epsilon>0$, let $T\in \mathbb{N}$ be an integer which satisfies $\alpha^T<(1-\alpha)\epsilon$. Then we obtain
\begin{align*}
X_T\leq\sum_{s=1}^T\alpha^{T-s}=\fraction{1-\alpha^{T}}{1-\alpha},
\end{align*}
equality holds when \(\xi_s = 1 \; (s = 1,\,\cdots,\,T)\). Thus for all $\epsilon>0$, there exists $T\in \mathbb{N}$ which satisfies
\begin{align}
\mathbb{P}\left(\left|X_T-\fraction{1}{1-\alpha}\right|<\epsilon\right)>0.\label{upperepsilon}
\end{align}
By Eqs. \eqref{eq:upper} and \eqref{upperepsilon}, we conclude that the upper bound $\operatorname{sup}X_t =1/(1-\alpha)$. 

In the same manner, the lower bound of $X_t$ is obtained as follows:
\begin{equation}
    X_t = \sum_{s=1}^{t}\alpha^{t-s}\xi_s\geq -\sum_{s=1}^{t}\alpha^{t-s} = -\fraction{1-\alpha^{t}}{1-\alpha} >\lim_{t\to \infty} -\fraction{1-\alpha^t}{1-\alpha}=-\fraction{1}{1-\alpha},
\end{equation}
and there exists $T\in\mathbb{N}$ which satisfies
\begin{align}
\mathbb{P}\left(\left|X_T-\left(-\fraction{1}{1-\alpha}\right)\right|<\epsilon\right)>0.\label{lowerepsilon}
\end{align}
Therefore, we have the lower bound $\operatorname{inf}X_t = -1/(1-\alpha)$.
\end{proof}


Here we discuss reachability inside the upper and lower bounds.
The following statement indicates that the behavior changes significantly around the critical value \(\alpha = 1/2\).

\begin{thm}\label{thm:phase}
For $0<\alpha< 1/2$, there exists a position $r\in[-1/(1-\alpha),\,1/(1-\alpha)]$, which is not reachable for the walker of ARWs. On the other hand, for $1/2\leq\alpha<1$, any position $r\in[-1/(1-\alpha),\,1/(1-\alpha)]$ is reachable for the walker of ARWs.
\end{thm}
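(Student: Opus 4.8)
The plan is to treat the two regimes separately, since the structure of reachable points is genuinely different. For the subcritical case $0<\alpha<1/2$, I would exhibit an explicit "gap" in the set of reachable points near one of the extremes. The key observation is that if $\xi_1=1$, then $X_t = \alpha^{t-1} + \sum_{s=2}^{t}\alpha^{t-s}\xi_s$, and the trailing sum is bounded in absolute value by $\sum_{s=2}^{t}\alpha^{t-s} = (1-\alpha^{t-1})/(1-\alpha) < \alpha/(1-\alpha)$ after factoring, wait — more carefully, $\sum_{j=0}^{t-2}\alpha^{j} = (1-\alpha^{t-1})/(1-\alpha)$, and the contribution of the first step when $\xi_1=1$ is $\alpha^{t-1}$, which is the \emph{smallest} weight. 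It is cleaner to fix the \emph{last} coordinate: if $\xi_t=+1$ then $X_t \ge 1 - \sum_{s=1}^{t-1}\alpha^{t-s} = 1 - (\alpha - \alpha^{t})/(1-\alpha) \ge 1 - \alpha/(1-\alpha) = (1-2\alpha)/(1-\alpha) > 0$ when $\alpha<1/2$; and if $\xi_t=-1$ then symmetrically $X_t \le -(1-2\alpha)/(1-\alpha) < 0$. Hence no $X_t$ ever lands in the open interval $\bigl(-(1-2\alpha)/(1-\alpha),\,(1-2\alpha)/(1-\alpha)\bigr)$, so any $r$ strictly inside it (for instance $r=0$) fails the reachability condition once $\epsilon$ is taken smaller than $(1-2\alpha)/(1-\alpha)$. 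This settles the first half.

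For the supercritical case $1/2 \le \alpha < 1$, I would prove that every $r$ in the closed interval $[-1/(1-\alpha),\,1/(1-\alpha)]$ is reachable by a greedy (digit-by-digit) construction: choose $\xi_1,\xi_2,\dots$ one at a time so that the partial value $Y_k := \sum_{s=1}^{k}\alpha^{k-s}\xi_s$ — or, more convenient for the recursion, the "forward" value $Z_k := \sum_{s=1}^{k}\alpha^{s-1}\xi_s$, which satisfies $Z_k = Z_{k-1} + \alpha^{k-1}\xi_k$ — stays within $\alpha^{k}/(1-\alpha)$ of the target scaled appropriately. Concretely, given a target $r$ with $|r|\le 1/(1-\alpha)$, set $\xi_k = \operatorname{sign}(r - Z_{k-1})$ (breaking ties arbitrarily). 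An induction shows $|r - Z_k| \le \alpha^{k}/(1-\alpha)$: the point is that the "reach" of the remaining steps, $\sum_{s=k+1}^{\infty}\alpha^{s-1} = \alpha^{k}/(1-\alpha)$, is at least the jump size $\alpha^{k}$ of the step just taken precisely when $\alpha \ge 1-\alpha$, i.e. $\alpha \ge 1/2$ — this is where the threshold enters. So $Z_k \to r$, and since $X_t$ for the time-$t$ walk equals $\alpha^{t-1} Z_t$ evaluated with the same increments (note $X_t$ and $Z_t$ use the weights in opposite orders, so one must be slightly careful: $X_t = \sum_{s=1}^t \alpha^{t-s}\xi_s$, and reversing the increment labels $\xi_s \mapsto \xi_{t+1-s}$ turns this into $\sum_{s=1}^t \alpha^{s-1}\xi_{t+1-s}$), I would instead run the greedy construction \emph{directly} on $X_t = \alpha X_{t-1} + \xi_t$: pick $\xi_t$ last in terms of the recursion but observe that $X_t = \alpha^{t-1}\xi_1 + \cdots + \xi_t$, so the coarsest control is on $\xi_1$; it is genuinely more natural to build the infinite sequence $(\xi_s)_{s\ge1}$ greedily for the forward sum and then note that $\mathbb{P}(|X_t - r|<\epsilon)>0$ holds because for $t$ large the distribution of $X_t$ contains the point obtained by running the recursion with a well-chosen finite prefix. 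The cleanest phrasing: for the reversed sequence, $X_t$ with increments $(\xi_t,\xi_{t-1},\dots,\xi_1)$ equals $Z_t$ with increments $(\xi_1,\dots,\xi_t)$, so reachability of $r$ for $X$ is equivalent to "$r$ is approximable by partial sums $Z_t$," which the greedy argument gives.

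The main obstacle I anticipate is the bookkeeping in the supercritical direction: making the greedy/nested-interval argument fully rigorous requires tracking that at each stage the still-reachable set is exactly an interval of radius $\alpha^{k}/(1-\alpha)$ centered at $Z_k$, and that consecutive such intervals are nested (this is the content of $\alpha\ge 1/2$, which guarantees the two half-intervals obtained by the two choices of $\xi_{k+1}$ overlap or at least abut, so no gap opens up). Once that invariant is established, Cantor's intersection theorem (or just convergence of $Z_k$) finishes it, and translating back to a statement about $\mathbb{P}(|X_t-r|<\epsilon)>0$ for some finite $t$ is routine: take $k$ with $\alpha^{k}/(1-\alpha)<\epsilon$ and use the finite prefix $\xi_1,\dots,\xi_k$ (padding with arbitrary further steps only moves $X_t$ by at most $\alpha^{k}/(1-\alpha)$ more in the reversed ordering, so one should fix $t=k$ and use $X_k$ with the reversed increments). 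I would also double-check the boundary values $r=\pm1/(1-\alpha)$ separately — these are covered by Proposition~\ref{analyses:prp:supinf} already — and handle the tie-breaking case $\alpha=1/2$ explicitly, where the two sub-intervals meet exactly at a point.
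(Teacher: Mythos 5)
Your proposal is correct and follows essentially the same route as the paper: for $0<\alpha<1/2$ you exhibit the same gap $\bigl(-(1-2\alpha)/(1-\alpha),\,(1-2\alpha)/(1-\alpha)\bigr)$ that no $X_t$ ($t\ge 1$) can enter, and for $\alpha\ge 1/2$ you use the same device of reversing the increment labels so that $X_t$ becomes the forward sum $Z_t=\sum_{s=1}^{t}\alpha^{s-1}\xi_s$, which is then approximated greedily. The one genuine (and welcome) difference is your induction invariant $|r-Z_k|\le \alpha^k/(1-\alpha)$, which holds exactly when $\alpha\ge 1/2$ and therefore absorbs the boundary case $\alpha=1/2$ that the paper instead treats separately via binary expansions.
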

\begin{proof}
{To facilitate the proof, we introduce the concept of an inverse path as follows.
Assuming that the final time step of the walks is fixed to $T\in\mathbb{N}$, we define a new sequence $\bfit{\zeta}^{(T)} = 
(\zeta^{(T)}_t)_{t=1}^T$ as
\begin{equation}
    \zeta^{(T)}_t = \xi_{T+1-t}.
\end{equation}
Then, it is obvious that $\zeta^{(T)}_t$ has a one-by-one correspondence with $\xi_t$ and is probabilistically determined by the rule same as Eq.~\eqref{model:eq:xi}. We set another sequence $(Y^{(T)}_t)_{t=0}^T$ defined by
\begin{equation}
    Y^{(T)}_0 = 0,\quad Y^{(T)}_t = Y^{(T)}_{t-1} +\alpha^{t-1}\zeta^{(T)}_t = \sum_{s=1}^{t}\alpha^{s-1}\zeta^{(T)}_s\quad (t\geq 1),
\end{equation}
as the {\it inverse path} of $(X_t)_{t=0}^{T}$. It is clear that
\begin{align}
Y^{(T)}_T=X_T.\label{xTequalyT}
\end{align}
Eq.~\eqref{xTequalyT} ensures that we can use $Y_T^{(T)}$ instead of $X_T$ in the discussion of probability distributions. However, we should note that the sequence $(X_t)_{t=0}^{T}$ is generally different from $(Y_t^{(T)})_{t=0}^{T}$. }

When $0<\alpha<1/2$, let $r$ and $\epsilon$ be
\begin{align*}
r=\epsilon=\frac{1-2\alpha}{2(1-\alpha)}>0.
\end{align*}
For the inequality $|X_t-r|<\epsilon$ to hold at time $t$, it is necessary to satisfy $X_t>0$. In that case, we have $\xi_t=1$, as we discussed in Proposition \ref{prp:positive},
and the following inequality is obtained:
\begin{align*}
X_t= 1+\alpha X_{t-1}>1-\frac{\alpha}{1-\alpha}=\frac{1-2\alpha}{1-\alpha}>0.
\end{align*}
Thus for all time $t$, we have
\begin{align*}
X_t-r>\frac{1-2\alpha}{1-\alpha}-\frac{1-2\alpha}{2(1-\alpha)}=\frac{1-2\alpha}{2(1-\alpha)},
\end{align*}
which results in
\begin{align*}
\mathbb{P}(|X_t-r|<\epsilon)=0,
\end{align*}
indicating that $r$ is not reachable.

Next, we consider $\alpha=1/2$. It immediately follows from the definition that any $r$ for which there exists $t$ such that $X_t=r$ is reachable. Therefore, we consider $r$ that does not satisfy this condition. When $\alpha=1/2$, we have the following equation at time $t$:
\begin{align}
    X_t+\sum_{s=1}^t\left(\fraction{1}{2}\right)^{t-s}=\sum_{\{s:\xi_s=1\}}1\cdot\left(\fraction{1}{2}\right)^{t-s-1}+\sum_{\{s:\xi_s=-1\}}0\cdot\left(\fraction{1}{2}\right)^{t-s-1}.\label{alpha1/2+}
\end{align}
The right-hand side of Eq.~\eqref{alpha1/2+} represents the entire set of binary numbers with 2 digits in the integer part and $(t-2)$ digits in the fractional part, ranging from 0 to less than 4, corresponding to the entire set of sequences \(\{\xi_s\}_{s=1}^{t}\). Thus, \(X_t\) represents the entire set of binary numbers with $(t-1)$ digits in the fractional part, greater than \(-2\) and less than 2, corresponding to the entire set of sequences \(\{\xi_s\}_{s=1}^{t}\). Therefore, for any \(\epsilon > 0\), if we consider a time \(T\) such that \((1/2)^T < \epsilon\), it holds that for any \(r \in [-2, 2]\), \(\mathbb{P}(|X_T - r| < \epsilon) > 0\) is satisfied. In other words, any point \(r \in [-2, 2]\) is reachable.

Finally, we discuss $1/2<\alpha<1$. As in the case of \(\alpha = 1/2\), we exclude and consider separately the case where there exists a time \(t\) such that \(X_t = r\). It suffices to consider \(r > 0\), as the case \(r < 0\) can be proven by applying the argument for \(r > 0\) multiplied by \(-1\). First, take a sufficiently large natural number \( T \) for \( \epsilon>0 \), the conditions specified later. Then, there exists $t_1$ such that $\zeta_1 = \zeta_2 = \dots = \zeta_{t_1} = 1$ and $Y_{t_1-1}^{(T)} < r < Y_{t_1}^{(T)}$, where $\zeta_1=\xi_T,\,\zeta_2=\xi_{T-1},\,\dots\,,\,\zeta_k=\xi_{T+1-k},\,\dots,\,\zeta_T=\xi_1$. In other words, reinterpret \(\xi\) and \(\zeta\) so that \( Y_t^{(T)} = Y_{t-1}^{(T)} + \alpha^t \zeta_t \). If $Y_{t_1}^{(T)}$ satisfies $|Y_{t_1}^{(T)}-r|<\alpha^{t_1}<\epsilon$, we have $T=t_1$. Otherwise, $|Y_{t_1}^{(T)}-r|\geq \epsilon$, there exists \(t_2\) such that \(Y_{t_2} < r < Y_{t_2-1}\) and \(\zeta_{t_1+1} = \dots = \zeta_{t_2} = -1\). This is because \(\alpha^{t_1} < \alpha^{t_1+1} / (1 - \alpha)\) and the following inequality:
\begin{align*}
Y_{t_1}^{(T)}-\sum_{k=t_1+1}^{\infty}\alpha^{k}=Y_{t_1}^{(T)}-\fraction{\alpha^{t_1+1}}{1 - \alpha}< Y_{t_1-1}^{(T)}<r.
\end{align*}
If $|Y_{t_2}^{(T)}-r|<\alpha^{t_2}<\epsilon$, we get $T=t_2$. As can be seen so far, \( T \) satisfies the following conditions: \(\alpha^T < \epsilon\), \(|Y_T^{(T)} - r| < \epsilon\), and either \(Y_{T-1}^{(T)} < r < Y_T^{(T)}\) or \(Y_T^{(T)} < r < Y_{T-1}^{(T)}\). Using the same argument as for \(t_1, t_2\), we obtain the sequence \(t_1, t_2, \dots, T\). Here, \(Y_{t_{2j-1}-1}^{(T)} < r < Y_{t_{2j-1}}^{(T)}\) and \(Y_{t_{2j}}^{(T)} < r < Y_{t_{2j}-1}^{(T)}\), with \(j = 1, 2, \dots\). Since $Y^{(T)}_T=X_T$, we have
\begin{align}
\mathbb{P}(|Y_T^{(T)}-r|<\epsilon)=\mathbb{P}(|X_T-r|<\epsilon)\geq\fraction{1}{2^T}>0.\label{risreachable}
\end{align}
Since Eq.~\eqref{risreachable} means that the position $r$ is reachable, the proof is completed.
\end{proof}
It should be noted that the case of $\alpha = 1/2$ is identical to the one discussed in Ref.~\cite{schmuland2003random}.
Moreover, this reference states that reachability in $\alpha = 1/2$ results in convergence of $X_t$ to the uniform distribution on $[-2,\ 2]$ as $t\to \infty$, which has been observed in Fig.~\refsubref{model:fig:dist}{5}.

\subsection{Positive-side residence time}
In this subsection, we consider the positive-side residence time \(T_{+}(t)\) defined as follows:
\begin{align}
    T_{+}(t) = \left|\{s\in \{1,\,\cdots,\,t\}\ |\ X_s\geq 0\}\right|.
\end{align}
{This positive-side residence time is a numerical quantity representing the total duration the ARWer spends in the region \(x \geq 0\).}
As observed with Fig.~\refsubref{model:fig:path}{1}, the effect of the last step $\xi_t$ on the current position $X_t$ gets stronger in a small $\alpha$.
From another point of view, the behavior of the positive-side residence time of ARWs is significantly different from that of simple RWs, making it highly interesting.
It is known that the positive-side residence time of simple RWs follows the arcsine law~\cite{levy1940sur};
the following proposition presents the cases where that of ARWs follows a different distribution.

\begin{prp}\label{prp:positive}
For \(0<\alpha \leq 1/2\), at time \(t\), the positive-side residence time \(T_{+}(t)\) of the ARW follows a binomial distribution \(B(t,1-p)\). Additionally, for \(1/2 < \alpha < 1\), if \(t\) satisfies \( \alpha^{t}-2\alpha+1>0\), the positive-side residence time \(T_{+}(t)\) of the ARW also follows a binomial distribution \(B(t,1-p)\).
\end{prp}
\begin{proof}
    For \(0<\alpha \leq 1/2\), we have the following relation: 
\begin{align}
|\alpha X_{t-1}|\leq\sum_{k=1}^{t-1}\alpha^{k}=\alpha\frac{1-\alpha^{t-1}}{1-\alpha}<\frac{\alpha}{1-\alpha}{\leq} 1,\label{alphafuto}
\end{align}
which leads to the inequality $-1+\alpha X_{t-1}<0$. Note that the inequality in the far right-hand side of Eq.~\eqref{alphafuto} is derived from the fact that $\alpha \leq 1/2$ implies $\alpha \leq 1/2 \leq 1-\alpha$. Then \(X_t>0\) gives \(\xi_t=1\) and the positive-side residence time is equal to the number of \(\xi_s\) satisfying \(\xi_s = 1\) among \((\xi_s)_{s=1}^t\). Since \(\mathbb{P}(\xi_s = 1)=1-p\), we conclude that the positive-side residence time of the ARW follows a binomial distribution \(B(t,\,1-p)\). 

Next, we consider the case \(1/2<\alpha<1\) and \( \alpha^{t}-2\alpha+1>0\). For any time \(t\), we have
\begin{align*}
    X_t&\geq \xi_t-\sum_{s=1}^{t-1}\alpha^{t-s}=\xi_t-\frac{\alpha-\alpha^t}{1-\alpha}\\
    &=\begin{cases}
        \fraction{1-2\alpha+\alpha^t}{1-\alpha}\vrule height0pt width0pt depth15pt\quad&(\xi_t=1),\\
        \fraction{-1+\alpha^t}{1-\alpha}\quad&(\xi_t=-1).
    \end{cases}
\end{align*}
Then, if \(t\) satisfies \( \alpha^{t}-2\alpha+1>0\) and \(\xi_t=1\), we get \(X_t>0\). In the same manner, if \(t\) satisfies \( \alpha^{t}-2\alpha+1>0\) and \(\xi_t=-1\), we obtain \(X_t<0\). Thus, since the sign of \(\xi_t\) matches the sign of \(X_t\), the positive-side residence time of the ARW follows the binomial distribution \(B(t,\,1-p)\).
\end{proof}
\subsection{Similarity to the normal distribution}\label{analyses:sub:distance}
In this subsection, we examine the probability distribution of the symmetric ARW with \(p = 1/2\) and present numerical results on its differences from the normal distribution. 

As described so far, the probability distribution of the symmetric ARW differs significantly from that of a simple RW. 
Yet, by focusing on their cumulative distribution functions and comparing the two, we discover not only their differences but also their similarities. 
As seen in Fig.~\ref{model:fig:dist}, the cumulative distribution function of the ARW with \(\alpha=0.9\) has a shape similar to that of the normal distribution, which is the limiting distribution of the simple RW. This observation of qualitatively justified by laying the two cumulative distribution functions as shown in Fig.~ \refsubref{analyses:fig:cvm}{dist}. The figure also indicates that, at least at time step $t=15$, the cumulative distribution function of ARWs is rather closer to that of the normal distribution than that of the simple RWs.
In this subsection, to quantitatively represent this property, we calculate their distances from the normal distribution using the Cram\'{e}r--von Mises (CvM) distance \cite{cramer1928composition,vonmises1931}.

\begin{figure}[p]
\centering
\begin{minipage}[t]{0.48\textwidth}
    \centering\includegraphics[width=\textwidth]{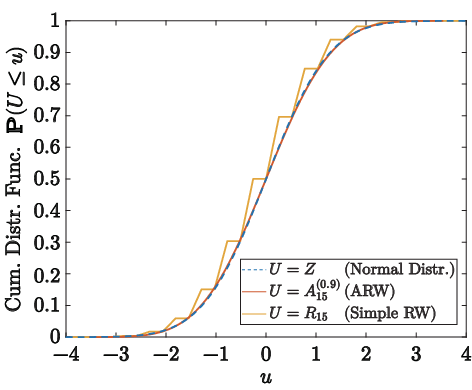}
    \subcaption{Comparison of the cumulative distribution functions among the standard normal distribution, the ARW, and the simple RW with time step $t=15$.}\label{analyses:fig:cvm_dist}
\end{minipage}\quad
\begin{minipage}[t]{0.48\textwidth}
    \centering\includegraphics[width=\textwidth]{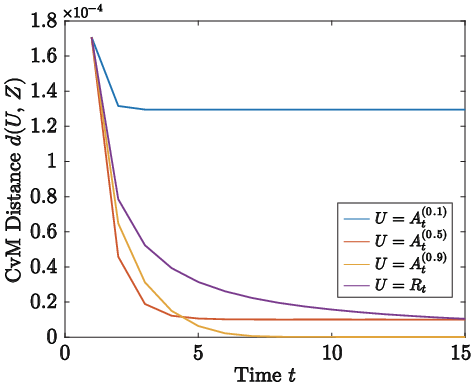}
    \subcaption{Calculational results for the CvM distance with the standard normal distribution over time step $t$ with $t=1,\,\cdots,\,15$ in the cases of $\alpha = 0.1$, $0.5$, and $0.9$, and simple RWs.}\label{analyses:fig:cvm_VSt15}
\end{minipage}\vspace{0.2\baselineskip}\\
\begin{minipage}[t]{0.48\textwidth}
    \centering\includegraphics[width=\textwidth]{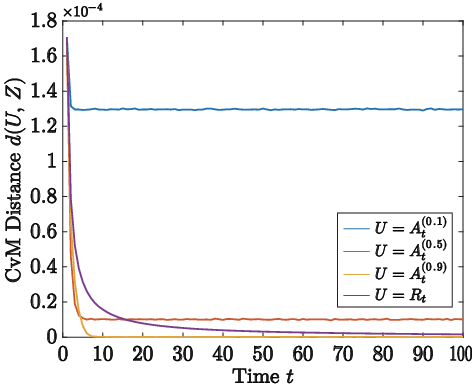}
    \subcaption{Simulational results for the CvM distance with the standard normal distribution over time step $t$ with $t =1,\,\cdots,\,100$ in the cases of $\alpha = 0.1$, $0.5$, and $0.9$, and simple RWs.}\label{analyses:fig:cvm_VSt100}
\end{minipage}\quad
\begin{minipage}[t]{0.48\textwidth}
    \centering\includegraphics[width=\textwidth]{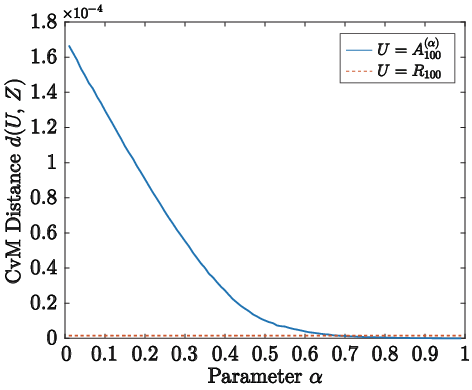}
    \subcaption{Simulational results for the CvM distance between the normalized random variable of the ARW at time step $t=100$ and the standard normal distribution over the parameter $\alpha$ with $\alpha=0.01,\,0.02,\,\cdots,\,0.99$.}\label{analyses:fig:cvm_VSalpha}
\end{minipage}
    \caption{Comparision of the standardized random variables among the normal distribution, the ARWs, and the simple RWs. The random variables at time step $t$ are denoted by $Z$, $A_t^{(\alpha)}$, and $R_t$, respectively. Subfigures (\subref{analyses:fig:cvm_VSt100}) and (\subref{analyses:fig:cvm_VSalpha}), displaying simulational results, drive 50,000 walkers for each walk. The definition of (asymptotic) CvM distance $d(U,\,Z)$ can be found in Eq.~\eqref{analyses:eq:acvm}.}\label{analyses:fig:cvm}
\end{figure}

The CvM distance is used to evaluate the similarity between two probability distributions and is applied in various fields, including statistical hypothesis testing and the evaluation of generative models in machine learning \cite{bellemare2017cramer,lheritier2021cramer}. We should note that the smaller this value, the greater the similarity between the two distributions. 
From now on, we examine the dissimilarity between the cumulative distribution function of the normal distribution and those of the ARW and the simple RW.

Let \( S_t \) be the random variable representing the position of a symmetric simple RW at time \( t \), and let \( Z \) be a random variable following the standard normal distribution. In this section, we calculate the cumulative distribution functions of the standardized random variable \( A_t^{(\alpha)}:=\sqrt{(1-\alpha^2)/(1-\alpha^{2t})}X_t \), which is associated with the ARW, and the random variable \( R_t:= S_t/\sqrt{t} \), which is associated with the simple RW. 

In the following, we calculate the CvM distances between $A_t^{(\alpha)}$ or $R_t$ and the standard normal variable $Z$. The CvM distance \(D(U,\,V)\) between random variables $U$ and $V$ is given by
\begin{equation}
D(U,\,V)=\int_{-\infty}^{\infty}\left\{\mathbb{P}(U\leq u)-\mathbb{P}(V\leq u)\right\}^2{\rm d}u.
\end{equation}
Here by the definition of integration, $D(U,\,V)$ can be rewritten as
\begin{equation}
    D(U,\,V) = \lim_{m_1\to -\infty}\lim_{m_2\to\infty}\lim_{n\to\infty} d(U,\,V;\,m_1,\,m_2,\,n),
\end{equation}
where
\begin{equation}
    d(U,\,V;\,m_1,\,m_2,\,n) = \fraction{m_2-m_1}{n}\sum_{k=1}^{n}\left\{\mathbb{P}\left(U\leq m_1+\fraction{m_2-m_1}{n}k\right)-\mathbb{P}\left(V\leq m_1+\fraction{m_2-m_1}{n}k\right)\right\}^2
\end{equation}
with $m_1<m_2$ and $n\in\mathbb{N}$. It should be noted that $m_1$ and $m_2$ represent the left and right edges of the interval, respectively, and $n$ denotes the number of division in the interval $[m_1,\, m_2]$.
In this paper, the CvM distance is asymptotically calculated as $d(U,\,V;\,m_1,\,m_2,\,n)$ with sufficiently wide range $[m_1,\,m_2]$ and large $n$. Specifically, we set $(m_1,\,m_2,\,n) = (-3,\,3,\,600)$; that is, the (asymptotic) CvM distances are defined as follows: 
\begin{equation}\label{analyses:eq:acvm}
d(U_t,\,Z) = d(U_t,\,Z;\,-3,\,3,\,600) = \fraction{1}{100}\sum_{k=1}^{600}\left\{\mathbb{P}\left(U_t\leq -3+\fraction{k}{100}\right)-\mathbb{P}\left(Z\leq -3+\fraction{k}{100}\right)\right\}^2,
\end{equation}
where \(U_t=A_t^{(\alpha)}\), \(R_t\). 

Figure~\refsubref{analyses:fig:cvm}{VSt15} presents the numerical calculations of the CvM distances between the cumulative distribution functions of the normal distribution and those of the ARW with \(\alpha = 0.1\), $0.5$, $0.9$ and the simple RW over the time interval from 1 to 15. We can see that at early stage, the ARWs with $\alpha = 0.5$ or $0.9$ have a cumulative distribution functions closer than that of the simple RWs. It is difficult for computers to obtain similar results until too large time step $t$ as calculational results, in particular for ARWs, but is possible by observing the behavior of a lot of virtual walkers driven by the random numbers as shown in Fig.~\refsubref{analyses:fig:cvm}{VSt100}. This figure clarifies that the actual CvM distances can converge to some values, which is not $0$, while the one with the simple RWs seems to converge to $0$ for sufficiently large $t$. Figure~\refsubref{analyses:fig:cvm}{VSalpha} shows $\alpha$-dependency of the CvM distance $d(A_{100}^{(\alpha)},\,Z)$ between the cumulative distribution functions of ARWs and the normal distribution at time $t=100$, which appears to be almost identical to the convergent value of $d(A_{t}^{(\alpha)},\,Z)$, judging from Fig.~\refsubref{analyses:fig:cvm}{VSt100}. We can see that the value is monotonically decreasing for $\alpha$. The orange dotted line in this figure indicates the CvM distance $d(R_{100},\,Z)$ between the simple-RW distribution and the standard normal distribution. In our results, if $\alpha$ is greater than $0.68$, the CvM distance gets closer than that of the simple RWs at time $t=100$.

Based on these results, the cumulative distribution function of the ARW is considered to have the following characteristics: first, {the cumulative function of the ARWs converge to some distributions, which resemble the normal distribution but are not exactly the same, much faster than that of the simple RWs.} Next, for the value of $\alpha$ larger than a certain size, at least \(\alpha > 0.5\) as shown in Fig.~\refsubref{analyses:fig:cvm}{VSt100}, the distributions of ARWs are much closer to that of the normal distribution than that of the simple RWs at the early stage. The characteristic observed here is expected to be related to the acceleration of decision-making processes.

Furthermore, at time $t=100$, the CvM distance from the normal distribution has largely stabilized for the ARW, whereas it continues to decrease for the RW. This suggests that while the distribution of the RW converges to the normal distribution in a mathematically rigorous sense, the ARW does not. In fact, the following theorem holds.

\begin{thm}\label{thm:weakconv}
    For any $\alpha\in (0,\,1)$, the random variable \( A_t^{(\alpha)} \) does not weakly converge to \( Z \).
\end{thm}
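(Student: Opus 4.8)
The plan is to argue by incompatibility of supports. Proposition~\ref{analyses:prp:supinf} shows that $X_t$ is confined to the bounded interval $[-1/(1-\alpha),\,1/(1-\alpha)]$, and the normalizing constant appearing in $A_t^{(\alpha)}$ stays bounded as $t\to\infty$ (it tends to $1/\sqrt{1-\alpha^2}$ rather than growing, in contrast with the $\sqrt t$ normalization used for the simple RW). Consequently $A_t^{(\alpha)}$ is supported, uniformly in $t$, in one fixed compact interval, whereas $Z$ has full support on $\mathbb{R}$; a sequence living in a fixed compact set cannot converge weakly to $Z$.

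First I would record the deterministic bound. From $X_t=\sum_{s=1}^{t}\alpha^{t-s}\xi_s$ with $|\xi_s|=1$ we get $|X_t|\le (1-\alpha^t)/(1-\alpha)$, and with $p=1/2$ the variance of $X_t$ equals $(1-\alpha^{2t})/(1-\alpha^2)$ by Proposition~\ref{analyses:prp:feature}. Hence
\[
\bigl|A_t^{(\alpha)}\bigr|
= \sqrt{\frac{1-\alpha^2}{1-\alpha^{2t}}}\,|X_t|
\le \sqrt{\frac{(1-\alpha^t)(1+\alpha)}{(1-\alpha)(1+\alpha^t)}}=:c_t .
\]
Next I would observe that $t\mapsto (1-\alpha^t)/(1+\alpha^t)$ is increasing, so $c_t$ increases in $t$ and $c_t\uparrow C(\alpha):=\sqrt{(1+\alpha)/(1-\alpha)}<\infty$. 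In particular $\mathbb{P}\bigl(|A_t^{(\alpha)}|\le C(\alpha)\bigr)=1$ for every $t\ge 1$.

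Finally I would conclude using the characterization of weak convergence via distribution functions. Fix any $M>C(\alpha)$. Then the distribution function of $A_t^{(\alpha)}$ satisfies $\mathbb{P}(A_t^{(\alpha)}\le M)=1$ for all $t$, whereas $M$ is a continuity point of the standard normal distribution function $\Phi$ with $\Phi(M)<1$. If $A_t^{(\alpha)}$ converged weakly to $Z$, we would need $\mathbb{P}(A_t^{(\alpha)}\le M)\to\Phi(M)$, which is impossible. Hence $A_t^{(\alpha)}$ does not converge weakly to $Z$; equivalently, every subsequential weak limit of $(A_t^{(\alpha)})$ is supported in $[-C(\alpha),\,C(\alpha)]$ and therefore cannot equal $Z$.

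There is no serious analytic obstacle here; the only point needing care is that the standardizing factor $\sqrt{(1-\alpha^2)/(1-\alpha^{2t})}$ does not blow up, so that the support of $A_t^{(\alpha)}$ stays trapped in a fixed compact set. If one additionally wanted to identify the limiting law, one could let $t\to\infty$ in the characteristic function $\varphi_{A_t^{(\alpha)}}(\theta)=\prod_{s=0}^{t-1}\cos\!\bigl(\alpha^{s}\theta\sqrt{(1-\alpha^2)/(1-\alpha^{2t})}\bigr)$ and obtain $\prod_{s=0}^{\infty}\cos\!\bigl(\alpha^{s}\theta\sqrt{1-\alpha^2}\bigr)$, the characteristic function of $\sqrt{1-\alpha^2}\sum_{s\ge 0}\alpha^{s}\xi_s$ (an infinite Bernoulli-type convolution on a bounded interval), which is visibly not $e^{-\theta^2/2}$; but for the statement as given the compact-support argument is the most economical.
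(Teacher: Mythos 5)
Your proposal is correct and uses essentially the same idea as the paper: the support of $A_t^{(\alpha)}$ is trapped in a fixed compact interval (the paper bounds the left tail by $-1/\sqrt{1-\alpha}$, you bound $|A_t^{(\alpha)}|$ by $\sqrt{(1+\alpha)/(1-\alpha)}$), whereas $Z$ assigns positive mass outside any such interval, so weak convergence fails at a continuity point of $\Phi$. The extra remarks on the characteristic function and the Bernoulli-convolution limit go beyond what the paper proves but are not needed for the statement.
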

\begin{proof}
    Since \(X_t\geq-\fraction{1-\alpha^{t}}{1-\alpha}\) and \(\fraction{1-\alpha^t}{1-\alpha^{2t}}<1\), we have 
    \begin{align*}
    A_t^{(\alpha)}=\sqrt{\frac{1-\alpha}{1-\alpha^{2t}}} X_t&\geq -\sqrt{\frac{1-\alpha}{1-\alpha^{2t}}}\fraction{1-\alpha^{t}}{1-\alpha}=-\sqrt{\frac{1-\alpha^t}{1-\alpha^{2t}}}\sqrt{\frac{1-\alpha^{t}}{1-\alpha}}\\
    &>-\sqrt{\frac{1-\alpha^{t}}{1-\alpha}}>-\fraction{1}{\sqrt{1-\alpha}},
    \end{align*}
    for all time $t$. Then, we obtain
    \begin{align}
    \mathbb{P}\left(A_t^{(\alpha)}<-\fraction{1}{\sqrt{1-\alpha}}\right)=0.\label{susono}
    \end{align}
    By Eq. \eqref{susono} and the fact that \(\mathbb{P}\left(Z < -\frac{1}{\sqrt{1-\alpha}}\right) > 0\), \( A_t^{(\alpha)} \) does not weakly converge to \( Z \) for any $\alpha$.
\end{proof}
{The central limit theorem does not hold in this case because although the \(\xi_s\) are independent and identically distributed, the weighted sum \(\alpha^{t-s} \xi_s\) consists of independent but not identically distributed terms.} By the definition of weak convergence, note that if a random variable \( U \) depending on time \( t \) weakly converges to \( Z \), then \( D(U,\, Z) \) must converge to \(0\).

\section{Discussions}\label{discussions}
We have observed ARWs through paths and probability distributions and mathematically or numerically confirmed the properties expected according to these observations.
This section remarks some facts out of our expectation, 
which we noticed through the analyses.

First, we have shown the path uniqueness to the possible arrival points of a walker for $\alpha\in\mathbb{Q}$.
This property, in fact, can be violated if $\alpha$ is a real number.
A counterexample for the path uniqueness is given by $\alpha = (-1+\sqrt{5})/2$;
then $(\xi_1,\,\xi_2,\,\xi_3) = (1,\,1,\,-1)$ and $(-1,\,-1,\,1)$ has the same arrival point at $t=2$ 
since this $\alpha$ is a root of a $\lambda$'s polynomial $\lambda^2 +\lambda -1$.
Whether the number of counterexamples for the path uniqueness is finite or infinite is an algebraic problem for the future.

{Next, the positive-side residence time has been investigated, 
which it has been clarified that follows binomial distributions when $\alpha \leq 1/2$ or time $t$ is sufficiently small in contrast to that of simple RWs following the arcsine law.
Intuitively, one might expect that the concavity--convexity relationship of the distribution $T_+(t)$ of the positive-side residence time gradually change as $\alpha$ grows
since the observation of the cumulative distribution function gets closer to the normal distribution.
Figure~\ref{discussions:fig:prt}(a)--(c), however, exhibits that the convexity of $T_+(100)$ remains even though $\alpha$ grows;
it is not until $\alpha = 0.98$ that the distribution becomes quasi-uniform.
Mathematical investigations of $\alpha$-dependency of $T_+(t)$ are worth being conducted.}

\begin{figure}[t]
\centering
\begin{minipage}[b]{0.3\textwidth}
    \centering\includegraphics[width=\textwidth]{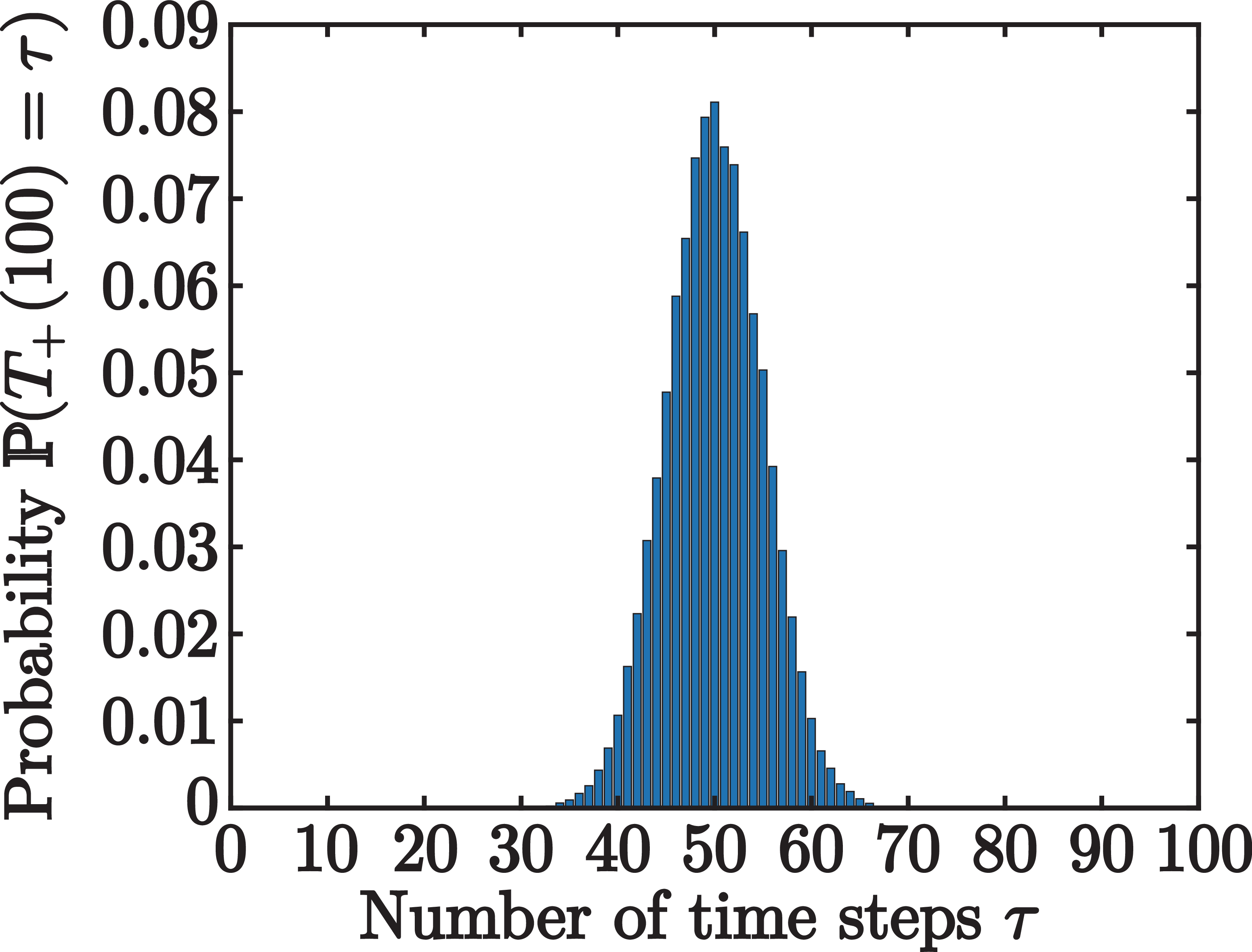}
    \subcaption{$\alpha = 0.5$.}\label{discussions:fig:prt_50}
\end{minipage}\quad
\begin{minipage}[b]{0.3\textwidth}
    \centering\includegraphics[width=\textwidth]{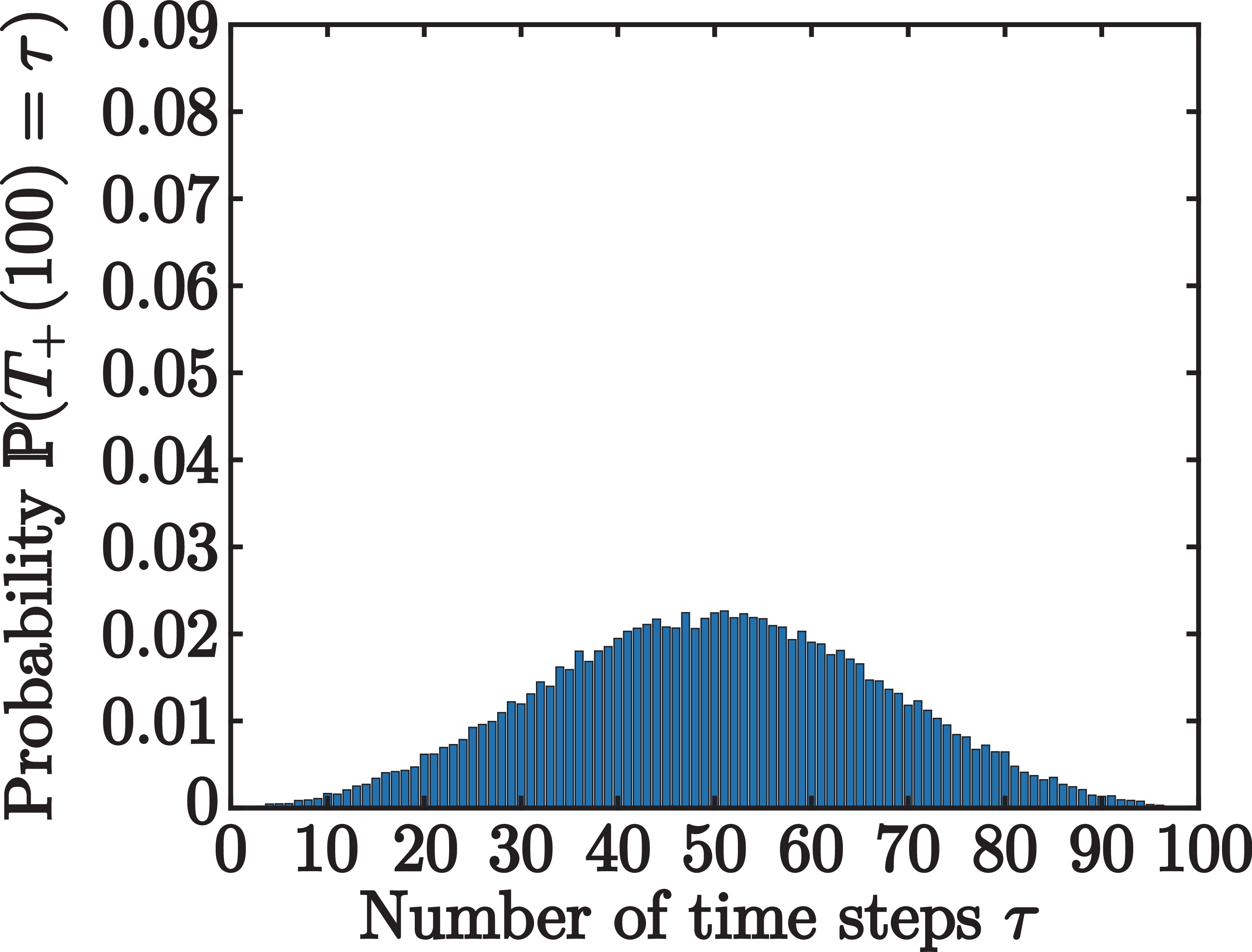}
    \subcaption{$\alpha = 0.9$.}\label{discussions:fig:prt_90}
\end{minipage}\quad
\begin{minipage}[b]{0.3\textwidth}
    \centering\includegraphics[width=\textwidth]{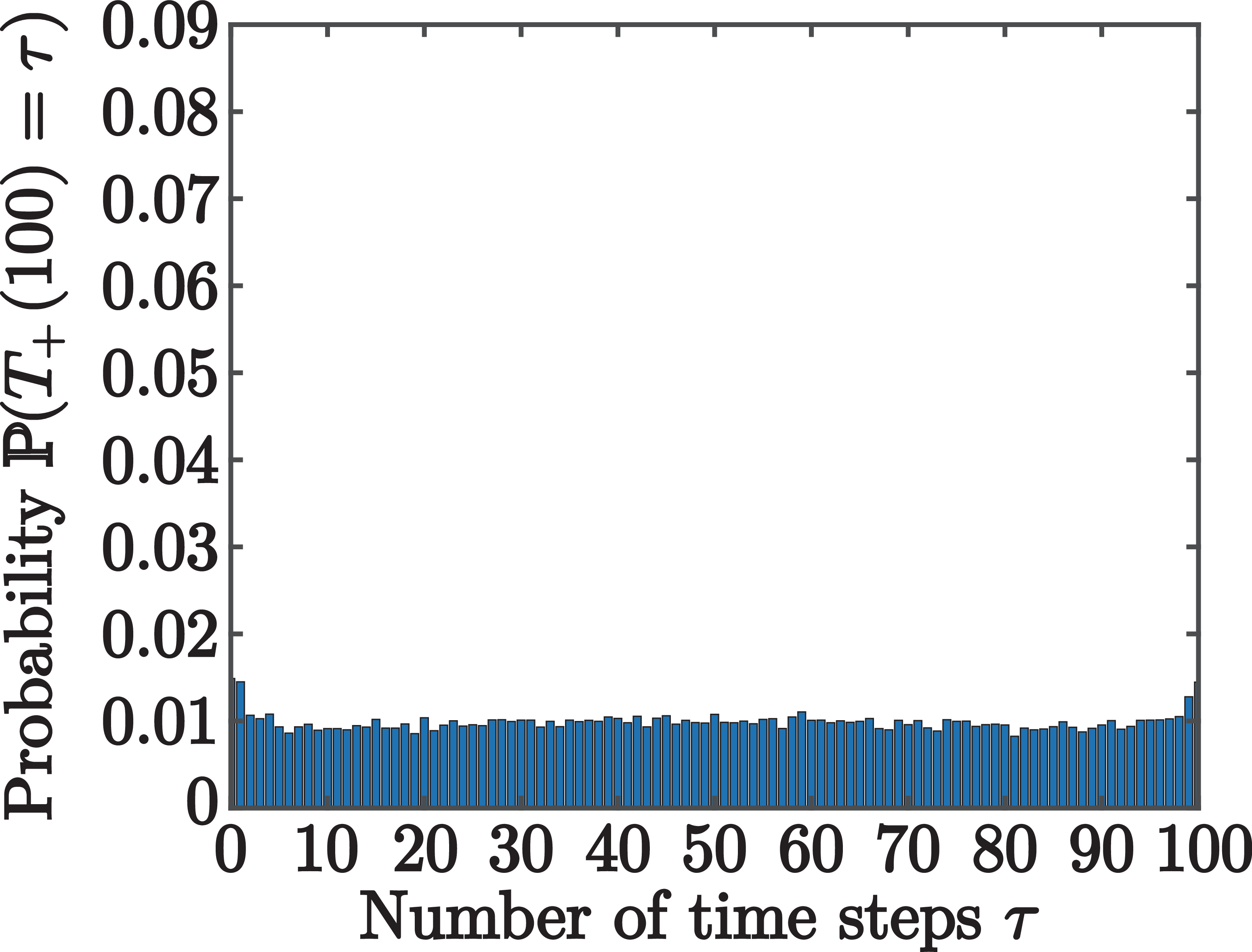}\\
    \subcaption{$\alpha = 0.98$.}\label{discussions:fig:prt_98}
\end{minipage}
    \caption{Probability distribution on the positive-side residence time $T_+(t)$ of symmetric ARWs at time $t=100$ in the cases that the parameter $\alpha$ is (a) $0.5$, (b) $0.9$, and (c) $0.98$. Note that these results were obtained by numerical simulations driving 50,000 walkers for each walk. Here, the case of $\alpha = 0.5$ exhibits the binomial distribution, see Theorem~\ref{prp:positive}. It can be seen that the mountain-like shape of the distribution remains in $\alpha = 0.9$. In $\alpha = 0.98$, the distribution is quasi-uniform.}\label{discussions:fig:prt}
\end{figure}

It is another unexpected fact that the CvM distance between ARW distributions and the normal distribution, as shown in Sec.~\ref{analyses:sub:distance}.
This characteristic variation of the CvM distance over time $t$ indicates that the ARW distributions immediately approach the normal distribution but never get to coincide with it.
This discrepancy between two distributions may be curious;
however, the simulational results on the positive-side residence time for $\alpha > 1/2$ demonstrate a property that distinguishes ARWs from simple RWs, as shown above.
Combining these two results on the positive-side residence time and the CvM distance would also be an interesting direction of future work.
Besides, determining this limiting value of the CvM distance remains an open problem. 
By considering the exterior of the support of the distribution of the ARW, the following lower bound is obtained. 
\begin{align*}
D(A_t^{(\alpha)}, Z)> 2\int_{-\infty}^{-\frac{1}{\sqrt{1-\alpha}}}\left\{\mathbb{P}(Z\leq u)\right\}^2{\rm d}u.
\end{align*}
However, further analysis is required for the interior of the support.

\section{Summary}\label{conclusions}
This paper investigated the properties of Antlion Random Walks (ARWs), which are characterized by their tendency to be pulled toward the origin. Through analysis of path trajectories and probability distributions, it was observed that the ARWs are bounded between upper and lower limits, with the distribution density within these bounds depending on the parameter $\alpha$. Specifically, for $\alpha =1/2$, the distribution of ARWs resembles a uniform distribution, while for $\alpha >1/2$, it approaches a normal distribution.

The study included both mathematical and numerical analyses, focusing on key characteristics such as expected value, variance, path uniqueness, reachability, and positive-side residence time.
It was shown that, under certain conditions, each path leading to a specific position is unique, and the model's distributional characteristics can be quantitatively compared to the normal distribution using the Cram\'{e}r--von Mises (CvM) distance. Furthermore, properties like positive-side residence time were investigated, illustrating how the process's memory parameter affects how often the walk remains on one side of the origin. These analyses highlight the rich dynamics of ARWs, emphasizing their deviation from standard Gaussian behaviors, especially at different ranges of $\alpha$.



\vspace{\baselineskip}
\noindent
\textbf{Acknowledgments:} 
A.N. is partially financed by the Grant-in-Aid for Young Scientists of Japan Society for the Promotion of Science (Grant No. JP23K13017).
T.Y. is supported by Grant-in-Aid for JSPS Fellows (Grant No. JP23KJ0384).
The authors appreciate to Dr. Masato Takei at Yokohama National University for useful comments.

\vspace{\baselineskip}\noindent
\textbf{Data availability statement:} The data that support the findings of this study are available from the corresponding author upon reasonable request.

\bibliographystyle{ieeetr_ty}
\bibliography{bib_antlion}

\end{document}